\documentclass{amsart}
\usepackage{vmargin,amsmath,amsfonts,amssymb,amsthm,xcolor,amscd,latexsym,xspace,enumerate}
\usepackage[pagebackref, colorlinks=true,linkcolor=blue,citecolor=red]{hyperref}
\usepackage{hyperref}
\usepackage{pdfpages}
\usepackage{tikz-cd}
\usetikzlibrary{arrows}
\usetikzlibrary{intersections}
\usepackage{pgfplots}
\usetikzlibrary{calc,3d,shapes, pgfplots.external, intersections}
\usepackage{pgfplots}
\usepackage[all]{xy}
\usepackage{wrapfig}
\usetikzlibrary{matrix,arrows,arrows.meta}

\newtheorem{theorem}{Theorem}[section]
\newtheorem{proposition}[theorem]{Proposition}
\newtheorem{lemma}[theorem]{Lemma}
\newtheorem{corollary}[theorem]{Corollary}
\theoremstyle{definition}
\newtheorem{definition}[theorem]{Definition}
\newtheorem{example}[theorem]{Example}
\newtheorem{problem}[theorem]{Problem}
\newtheorem{remark}[theorem]{Remark}
\numberwithin{equation}{section}
\newcommand{\R}{\mathbb R}

\newcommand{\N}{\mathbb N}
\newcommand{\Z}{\mathbb Z}
\newcommand{\T}{\mathbb T}
\newcommand{\one}{\mathbf 1}
\newcommand{\Va}{\mathcal V_a}
\newcommand{\Pa}{\mathcal P_a}
\newcommand{\gk}{\mathfrak k}
\DeclareMathOperator{\Ad}{Ad}
\DeclareMathOperator{\diver}{div}
\DeclareMathOperator{\vol}{vol}

\DeclareMathOperator{\SU}{SU}
\DeclareMathOperator{\SO}{SO}

 \pgfplotsset{compat=1.18} 

\title[On the  isoperimetric profile in infinite dimensional pro-Lie groups]{On the    isoperimetric profile \\
in infinite dimensional pro-Lie groups}

\author[E. Kazeem]{Eniola Kazeem}
\address{Eniola Kazeem\endgraf
Department of Mathematical Sciences, Federal University of Technology \endgraf
Akure, PMB 704, Akure, Ondo State, Nigeria}

\author[F. G. Russo]{Francesco G. Russo}
\address{Francesco  G. Russo\endgraf
School of Science and Technology, University of Camerino\endgraf 
via Madonna delle Carceri 9, 62032 Camerino, Italy; \endgraf
and \endgraf
Department of Mathematics and Applied Mathematics, University of the Western Cape\endgraf 
Private Bag X17, 7535 Bellville, South Africa; \endgraf
and \endgraf
Department of Mathematics and Applied Mathematics, University of Cape Town\endgraf Private Bag X1, Rondebosch, 7701 Cape Town, South Africa}

\date{17th of October 2026}

\begin{document}

\begin{abstract}
Let $K$ be a nontrivial compact connected Lie group with a bi-invariant Riemannian metric. On the compact pro-Lie group $K^{\N}$ we define a weighted variation by summing the distributional variations in its Lie group coordinates. We prove that all sublevel sets with bounded $L^1$-norm and weighted variation are compact in $L^1$ if and only if the coordinate weights tend to infinity. The proof gives a quantitative estimate for approximation by Haar averages over normal subgroups, illustrating that it is possible to apply a  methodology which is typical of the geometric measure theory in a specific area of the topological group theory. The resulting perimeter is invariant under left and right translations and is preserved under pullback by quotient homomorphisms equipped with quotient metrics. Isoperimetric minimizers exist at every prescribed Haar probability. For the infinite torus, the compactness condition is equivalent to properness of a weighted length on its character group and to compactness of a Fourier multiplier's resolvent. Finally, we show that  locally compact noncompact groups require an additional condition of tightness, in order to be treated as an appropriate generalization of the compact case.\\
\\
\textit{MSC 2020:} Primary 22C05, 26B30; Secondary 43A05, 22E15, 49Q20.\\
\textit{Keywords and phrases:} Compact groups, pro-Lie groups, Haar measure, bounded variation, weighted perimeter, quotient homomorphisms, isoperimetric inequality.
\end{abstract}

\maketitle

\section{Introduction and statement of the main results}\label{section1}

Compactness for sets of finite perimeter is a basic ingredient to understand the isoperimetric problem in  metric spaces. On a complete Riemannian manifold of finite volume, uniform bounds on perimeter give global compactness for characteristic functions; minimizing sequences of prescribed volume then admit minimizers. Many recent results study the loss of volume at infinity; see  \cite{nardulli1, ritore2, ritore1, narduru}. More precisely, let $(M,g)$ be a smooth Riemannian manifold with volume measure $\vol_g$, according to Chavel \cite[Chapter IX]{chavel}. For a measurable set $E$ and an open $U\subset M$, the classical perimeter is
\begin{equation}\label{classicalperimeter}
\mathcal P_g(E,U)=\sup\left\{\int_E\diver_gY \, d\vol_g:
Y\in C_c^1(U,TM),\ \|Y\|_\infty\leq1\right\},
\end{equation}
where $C^1_c(U, TM)$ denotes the space  of \textit{continuously differentiable vector fields Y with compact support} defined from $U$ to the \textit{tangent bundle} $TM$,  $Y$ is a  vector field with compact support contained in $U$ of \textit{divergence }$ \mathrm{div}_{g} \ Y$  and \textit{supremum norm} $\|Y\|_\infty$ upper bounded by one.   Finite perimeter sets  represent  fundamental notions in geometric measure theory and were introduced by Caccioppoli and De Giorgi (see \cite[Chapters 2 and 3]{afp}).  Briefly,  $\mathcal{P}_{g}(E)=\mathcal{P}_{g}(E, M)$ and say that $E$ has \textit{finite perimeter in } $U$ if \begin{equation}\mathrm{vol}_g(E \cap U) < \infty \ \ \mbox{and} \ \  \mathcal{P}_{g}(E, U)<\infty.\end{equation} 
\textit{The isoperimetric profile} of $M$ is the function $I_M  : v \in \ ]0, \mathrm{vol}_{g}(M)[ \ \longmapsto I_M(v) \in [0, + \infty[$ such that
\begin{equation}I_M(v)=\inf\left\{\mathcal{P}_g(E_o)  :  E_o \subseteq M, \  \mathrm{vol}_{g}(E_o)=v \right\},
\end{equation} where the infimum is taken over all relatively compact open set $E_o$ with smooth boundary (and  this applies when $E_o=E$ is a measurable set of smooth boundary). An \textit{isoperimetric region} of a complete Riemannian manifold $M$ of volume $v$ is a set $E_o \subseteq M$ such that $\mathrm{vol}_{g}(E_o)=v$ and $\mathcal{P}_g(E_o)=I_M(v)$. A \textit{minimizing sequence} of sets of volume $v$ is a sequence of sets $E_k$ of finite perimeter such that $\mathrm{vol}_{g}(E_k)=v$ for all $k \in \mathbb{N}$ and $\lim_{k \rightarrow \infty} \mathcal{P}_g(E_k)=I_M(v)$. The behaviour of minimizing sequences of fixed volume was investigated in  \cite{nardulli1, narduru, ritore2, ritore1}.

\begin{theorem}[Classical Finite Volume Compactness Theorem, see \cite{narduru, ritore2, ritore1}]\label{thm:classical}
Let $M$ be a complete Riemannian manifold of finite  volume. If measurable sets $E_k$ have uniformly bounded perimeters, then there is an $L^1(M)$-convergent subsequence  with  finite perimeter limit. For a minimizing sequence of fixed volume $v\in]0,\vol_g(M)[$, the limit has volume $v$, is minimizing, and the perimeters of $E_k$ converge to its perimeter.
\end{theorem}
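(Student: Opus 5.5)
The plan is the standard three-step argument: local compactness on compact pieces of $M$, a tail estimate that comes for free from finiteness of $\vol_g(M)$, and lower semicontinuity of the perimeter \eqref{classicalperimeter}.

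First, fix a point $p\in M$ and the exhaustion by geodesic balls $B_R=B_R(p)$. These are relatively compact because $M$ is complete (Hopf--Rinow). Cover $\overline{B_R}$ by finitely many coordinate charts, each bi-Lipschitz to a Euclidean ball. In each chart the Riemannian perimeter and the Euclidean perimeter of $E_k$ are comparable with constants depending only on the chart. The characteristic functions $\chi_{E_k}$ are bounded in $BV$ of each chart, since their $L^1$ norms are at most $\vol_g(M)$ and their variations are bounded by $\sup_k\mathcal P_g(E_k)$. The compact embedding $BV\hookrightarrow L^1$ on bounded Lipschitz domains (\cite[Chapter 3]{afp}) then gives, for each $R$, a subsequence converging in $L^1(B_R)$. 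A diagonal argument over $R\in\N$ yields one subsequence, still denoted $E_k$, with $\chi_{E_k}\to u$ in $L^1_{loc}(M)$. Passing to an a.e.\ convergent subsequence, $u=\chi_E$ for a measurable set $E$.

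Second, upgrade this to global $L^1(M)$ convergence. This is the only place where finite volume is used, and it is the step that fails in the infinite-volume setting of \cite{nardulli1, ritore2}. For every $R$,
\[
\|\chi_{E_k}-\chi_E\|_{L^1(M)}\le \|\chi_{E_k}-\chi_E\|_{L^1(B_R)}+2\vol_g(M\setminus B_R),
\]
and the last term tends to $0$ as $R\to\infty$ because $\vol_g(M)<\infty$. Hence $\chi_{E_k}\to\chi_E$ in $L^1(M)$. Lower semicontinuity of the perimeter is immediate from \eqref{classicalperimeter}. Indeed, for each fixed admissible $Y$ the functional $E\mapsto\int_E\diver_g Y\,d\vol_g$ is continuous under $L^1$ convergence, since $\diver_g Y$ is bounded with compact support. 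Taking the supremum over $Y$ gives $\mathcal P_g(E)\le\liminf_k\mathcal P_g(E_k)<\infty$.

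Third, take a minimizing sequence with $\vol_g(E_k)=v$. Convergence in $L^1(M)$ gives $\vol_g(E)=\lim_k\vol_g(E_k)=v$, and semicontinuity gives $\mathcal P_g(E)\le I_M(v)$.

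The step I expect to be the main obstacle is the reverse inequality $\mathcal P_g(E)\ge I_M(v)$. The profile $I_M$ is defined as an infimum over relatively compact smooth sets, whereas $E$ is only of finite perimeter. To bridge this I would use the Riemannian version of the approximation theorem (\cite[Theorem 3.42]{afp}, via mollification of $\chi_E$ in charts and a partition of unity, followed by a coarea choice of a good superlevel set). This produces smooth bounded sets $F_j$ with $\vol_g(F_j)\to v$ and $\mathcal P_g(F_j)\to\mathcal P_g(E)$.

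The volume is then corrected to exactly $v$ by a small local modification, for instance adding or removing a tiny geodesic ball away from $\partial F_j$. Its volume is $O(r^n)$ and its perimeter cost is $O(r^{n-1})$, which tends to $0$. Alternatively one can use the continuity of $I_M$ in $v$. This yields $I_M(v)\le\mathcal P_g(E)$, so $E$ is an isoperimetric region of volume $v$. Finally,
\[
\mathcal P_g(E)\le\liminf_k\mathcal P_g(E_k)\le\limsup_k\mathcal P_g(E_k)=I_M(v)=\mathcal P_g(E),
\]
so the perimeters of $E_k$ converge to $\mathcal P_g(E)$.
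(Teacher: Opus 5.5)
Your proposal is correct and follows the route the paper indicates. The paper gives no detailed proof of Theorem~\ref{thm:classical}, only the remark that local BV compactness and lower semicontinuity are combined with $\vol_g(M\setminus C)\to0$ along a compact exhaustion $C$, which is exactly your first two steps. Your extra step, showing that the limit is minimizing for $I_M$ as defined through smooth relatively compact sets, goes beyond what the paper spells out. The one point to tighten is the volume correction: do it as $F_j\cup B_{r_j}(x_0)$ or $F_j\setminus\overline{B_{r_j}(x_1)}$, with $x_0$ and $x_1$ fixed Lebesgue density points of $M\setminus E$ and of $E$, so that $r_j\to0$, subadditivity bounds the perimeter cost, and the corners are then smoothed. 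Placing a ball ``away from $\partial F_j$'' is not justified, since nothing guarantees a ball of the required volume disjoint from $\overline{F_j}$ with uniformly controlled geometry.
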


The proof of Theorem \ref{thm:classical} combines local BV compactness and lower semicontinuity (see  \cite[Chapters 2, 3]{afp} for the notion of BV function and of compactness theorems involving BV functions) with the fact that $\vol_g(M\setminus C)$ tends to zero along a compact exhaustion $C$. Theorem \ref{thm:classical}  is retained as motivation and comparison, but it cannot be applied verbatim to  infinite products of compact Lie groups which we will investigate later on.

In fact, we  need a modified definition of the classical perimeter \eqref{classicalperimeter} to study  infinite products of topological groups. Note that all topological groups are Hausdorff in the present paper, following \cite{G1, heyer, prolie, hofmor}. The passage from \textit{Lie groups} to \textit{pro-Lie groups} introduces a different issue, see Appendix \ref{app:preserved-background} for feedback and definitions on pro-Lie groups (or \cite{prolie, kazeem1, kazeem2}). A countable projective limit of compact Lie groups is a compact metrizable group, but need not be a finite dimensional manifold. A compatible product metric does not itself supply a Riemannian tensor, a volume form or a divergence operator. This is what happens when we consider the torus group $\T=\R/\Z$, which is a compact connected abelian Lie group, and form the product of finitely many copies $\T^n$ of $\T$, or ask more, and consider the cartesian product $\T^\N$ of countably many copies of $\T$. In the first case we still have a compact connected abelian Lie group, while in the second case $\T^\N$ is a compact connected abelian pro-Lie group which is more sophisticated to study. We therefore use normalized Haar measure and define variation directly through smooth functions on finite Lie group quotients. Riemannian metrics will be used on the finite   dimensional factors only.

Throughout Theorem \ref{thm:intro} which will be proved following the philosophy of Theorem \ref{thm:classical}, we have
\begin{equation}\label{infinitedimensionaliegroup}
G=K^{\N},\qquad \mbox{and} \qquad m=m_K^{\otimes\N},
\end{equation}
where $K$ is a fixed nontrivial compact connected Lie group and $m$ is the product measure of the normalized Haar measure $m_K$ on $K$, see \cite{heyer, hofmor}. Concerning adjoint representations, we note that an $\Ad(K)$-invariant inner product on its Lie algebra determines a bi-invariant Riemannian metric, see \cite{hilgertneeb, ledonne}. If $\mathcal V_j$ denotes the \textit{distributional variation} in the $j$th group coordinate, we consider for any $f \in L^1(G,m;\mathbb{R})$ 
\begin{equation}\label{newperimeter}
\Va(f)=\sum_{j\geq1}a_j\mathcal V_j(f),\qquad
\Pa(E)=\Va(\one_E),\qquad 0<a_j<\infty,
\end{equation}
where  $E$ is a measurable set in $G$ with respect to $m$ and $\one_E$ the constant function to $1$ on $E$. The definitions and all conventions appear in Section~\ref{sec:variation}  later on.

\begin{theorem}[First Main Theorem]\label{thm:intro}
Given $G$ as per \eqref{infinitedimensionaliegroup} and $\Va(f), \ \Pa(E)$ as per \eqref{newperimeter}, the following two conditions are equivalent:
\begin{enumerate}
\item[{\rm (i).}] $a_j\to\infty$;
\item[{\rm (ii).}] For every two positive constants $A$ and $B$, the set
\[
\{f\in L^1(G,m;\R):\|f\|_1\leq B,\ \Va(f)\leq A\}
\]
is compact in $L^1(G,m;\R)$.
\end{enumerate}
Under these conditions, the isoperimetric constant of $\Pa(E)$ exists, that is, the infimum of $\Pa(E)$ over measurable sets of any prescribed mass $v\in]0,1[$ is attained.
\end{theorem}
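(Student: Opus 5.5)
We prove (i)$\Rightarrow$(ii) by approximating with Haar averages over the tail subgroups, and (ii)$\Rightarrow$(i) by a counterexample built from a fixed function in one coordinate; existence of minimizers then follows by the direct method.

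\emph{Step 1: the approximation estimate.} For $n\geq1$ let $N_n=\{e\}^n\times K^{\{n+1,n+2,\dots\}}$, a closed normal subgroup with $G/N_n\cong K^n$, and let $P_nf(x)=\int_{N_n}f(xh)\,dm_{N_n}(h)$ be the conditional expectation onto functions of the first $n$ coordinates. We first prove the key estimate
\[
\|f-P_nf\|_1\leq C_K\sum_{j>n}\mathcal V_j(f)\leq C_K\Big(\sup_{j>n}a_j^{-1}\Big)\Va(f).
\]
To do this we write $f-P_nf$ as a telescoping sum of the differences $P_{j-1}f-P_jf$ for $j>n$. Each difference is controlled by averaging $f$ over the single factor $K_j$. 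The one-factor bound is the Poincar\'e inequality in $L^1$ on $K$ with its bi-invariant metric: for $u\in BV(K)$,
\[
\|u-\textstyle\int_K u\,dm_K\|_{L^1(K)}\leq C_K|Du|(K).
\]
We apply this fibrewise, with the other coordinates frozen. Fubini and lower semicontinuity of the variation under the conditional expectations $P_{j-1}$, which commute with translations in the $j$th coordinate, then give $\|P_{j-1}f-P_jf\|_1\leq C_K\mathcal V_j(f)$. The telescoping sum converges in $L^1$ because $P_mf\to f$ by martingale convergence. This is where we rely on the conventions of Section~\ref{sec:variation}: $\mathcal V_j$ must be the supremum of $\int f\,X_j\varphi\,dm$ over cylindrical smooth $\varphi$ and unit vector fields $X_j$ on the $j$th factor.

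\emph{Step 2: (i)$\Rightarrow$(ii).} Let $S_{A,B}$ be the sublevel set in (ii). On $S_{A,B}$ the map $P_n$ takes values in $BV(K^n)$ with $\|P_nf\|_1\leq B$ and $|D(P_nf)|(K^n)\leq(\min_j a_j)^{-1}A$, which is finite since all $a_j>0$. By the Rellich-type BV compactness theorem on the compact manifold $K^n$ \cite[Chapter 3]{afp}, $P_n(S_{A,B})$ is totally bounded in $L^1$. By Step 1, $S_{A,B}$ lies within $C_KA\sup_{j>n}a_j^{-1}$ of $P_n(S_{A,B})$, and this distance tends to $0$ as $n\to\infty$ when $a_j\to\infty$. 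Hence $S_{A,B}$ is totally bounded. It is also closed, because $\Va$ is a supremum of $L^1$-continuous functionals and is therefore lower semicontinuous. So $S_{A,B}$ is compact.

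\emph{Step 3: (ii)$\Rightarrow$(i).} Suppose $a_j\not\to\infty$, so $a_{j_k}\leq M$ along some subsequence. Fix a nonconstant smooth $u$ on $K$ with $\int_K u\,dm_K=0$, and put $f_k(x)=u(x_{j_k})$. Then $\|f_k\|_1=\|u\|_1$ and $\Va(f_k)\leq M|Du|(K)$, so all $f_k$ lie in one sublevel set. But by independence of the coordinates $\|f_k-f_l\|_1$ equals a fixed positive constant $c$ for every $k\neq l$, so no subsequence is Cauchy. This contradicts (ii).

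\emph{Step 4: isoperimetric minimizers.} Take a minimizing sequence $E_k$ with $m(E_k)=v$. The infimum is finite: a cylinder set over $K^1$ with smooth boundary has finite $\Pa$, since only $a_1\mathcal V_1$ contributes. By (ii) a subsequence $\one_{E_k}$ converges in $L^1$, and the limit is again an indicator $\one_E$. Convergence of the mass gives $m(E)=v$, and lower semicontinuity gives $\Pa(E)\leq\liminf\Pa(E_k)$, so $E$ is a minimizer. Compactness of the whole space $G$ with $m(G)=1$ means no mass escapes, in contrast with Theorem~\ref{thm:classical}.

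The main obstacle is Step 1, specifically the fibrewise Poincar\'e argument. One must check that the distributional $\mathcal V_j$, which is defined by testing only against cylindrical functions, dominates the $BV$ variation of almost every one-dimensional-factor slice. We would verify this first for cylindrical smooth $f$ and then extend by density using Anzellotti-type strict approximation via $P_m$ and mollification on $K^m$.
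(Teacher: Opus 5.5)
Your proof is correct and has the same architecture as the paper's. Both arguments use Haar averages over the tail subgroups $1^n\times K^{\{n+1,n+2,\ldots\}}$ with a tail bound $\|f-P_nf\|_1\le C\sum_{j>n}\mathcal V_j(f)$. Both then use compactness at the finite stage $K^n$, with total boundedness plus lower semicontinuity giving (i)$\Rightarrow$(ii). The converse uses a family living in distinct coordinates with bounded weights, and minimizers come from the direct method.

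The genuine difference is how you obtain the one-coordinate bound $\|g-A_jg\|_1\le C\,\mathcal V_j(g)$. The paper (Lemma~\ref{lem:translate}) proves the stronger estimate $\|R_{j,h}f-f\|_1\le d_K(1,h)\,\mathcal V_j(f)$ purely by duality. It pairs $R_{j,h}f-f$ with a scalar cylinder $\varphi$, $\|\varphi\|_\infty\le1$, and integrates along a minimizing geodesic $t\mapsto\exp(tX)$. This rewrites the pairing, up to sign, as $\int_0^1\int_G f\,\diver_j\Phi_t\,dm\,dt$ with admissible fields satisfying $|\Phi_t|\le|X|=d_K(1,h)$. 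Averaging in $h$ then gives the explicit constant $c_K=\int_K d_K(1,h)\,dm_K(h)$.

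That estimate holds for every $f\in L^1$ at once, so the slicing and strict-approximation issue you flag as the main obstacle never arises in the paper. Your fibrewise $L^1$-Poincar\'e route is nonetheless sound. $P_m$ and right convolution on $K^m$ with a nonnegative smooth approximate identity never increase $\mathcal V_j$. This follows from translation invariance (Proposition~\ref{prop:equivariant}) and the fact that $\mathcal V_j$ is a supremum of linear functionals. So your approximations are automatically strict, and the inequality passes from smooth cylinders to all of $L^1$ because $g\mapsto\|g-A_jg\|_1$ is $L^1$-continuous. The price is an unspecified Poincar\'e constant and a dependence on BV theory on $K$.

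At the finite stage, the paper uses its translation estimate, mollification and Arzel\`a--Ascoli (Lemma~\ref{lem:finite}), whereas you quote Rellich compactness on $K^n$. For that you should record that the product-Riemannian total variation is at most $\sum_{j\le n}\mathcal V_j$, since $|Y|\le1$ forces $|Y_j|\le1$ in every block. Your smooth counterexample $u(x_{j_k})$ is an equivalent variant of the paper's indicator cylinders.

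Two small repairs are needed. First, since $P_{j-1}f-P_jf=(A_j-I)P_jf$, the monotonicity you need in Step 1 is $\mathcal V_j(P_jf)\le\mathcal V_j(f)$, a property of $P_j$ rather than $P_{j-1}$. Second, in Step 4 finiteness of the infimum needs a finite-perimeter subset of $K$ of mass \emph{exactly} $v$. The paper constructs one in Lemma~\ref{lem:volume}; a cylinder set with smooth boundary does not by itself supply the prescribed mass.
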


From now on, we write briefly $L^1(G,m)$ instead of $L^1(G,m);\mathbb{R}$ since we deal with real valued functions on pro-Lie groups. Our second main result specializes to the case of the torus group $\mathbb{T}$, which is an interesting one dimensional compact abelian connected Lie group.  The \textit{character group} (also called \textit{Pontryagin dual}) of $\T^{\N}$ is $\widehat{\T^{\N}}=\Z^{(\N)}$ (see \cite[Chapters 1, 8]{hofmor}), which can be visualized with the finite   support integral sequences. Indeed, a character sends some basic identity neighborhood into an arc containing no nontrivial subgroups; the unrestricted tail subgroup must then lie in its kernel. Thus every character depends on finitely many coordinates and has the form
\[
\chi_k(x)=\exp\!\left(2\pi i\sum^\infty_{j=1}k_jx_j\right),\qquad k\in\Z^{(\N)}=\widehat{\T}^{(\N)}=\widehat{\T^{\N}}.
\]
For $f\in L^1(\T^\N,m)$, let $\widehat f(k)=\int_{\T^\N} f \ \overline{\chi_k}\,dm$, and define the \textit{weighted length} $\ell_a(k)=   {\underset{j=1} {\overset{\infty}\sum}} \ a_j \ |k_j|$.

\begin{theorem}[Second Main Theorem]\label{thm:dual}
For the infinite dimensional pro-Lie group $\T^\N$, the following three conditions are equivalent:
\begin{enumerate}
\item[{\rm (i).}] $a_j\to\infty$;
\item[{\rm (ii).}] Every set $\{k\in\Z^{(\N)}:\ell_a(k)\leq R\}$ is finite with  $R$ positive constant;
\item[{\rm (iii).}] The nonnegative selfadjoint Fourier multiplier
\[
\Lambda_a\chi_k=\ell_a(k)\chi_k
\]
has compact resolvent on $L^2(\T^{\N},m)$, that is, has spectrum with compact complement.
\end{enumerate}
These conditions are  equivalent to the compactness assertion of Theorem~\ref{thm:intro} {\rm (ii)}. Moreover for a real function $f\in L^1(\T^\N,m)$, we have that
\[
2\pi \ \ell_a(k)| \ \widehat f(k)| \ \leq \ \Va(f).
\]
\end{theorem}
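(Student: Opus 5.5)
The plan is to prove the cycle (i)$\Rightarrow$(ii)$\Rightarrow$(i), then (ii)$\Leftrightarrow$(iii). The identification with Theorem~\ref{thm:intro}~(ii) follows from Theorem~\ref{thm:intro} itself, since its condition (i) is the same. The Fourier bound is proved first, because it is the only analytic input.

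\textbf{The Fourier estimate.} Let $f$ be real with $\Va(f)<\infty$, and fix $k\in\Z^{(\N)}$ supported in $\{1,\dots,n\}$. The $j$th variation $\mathcal V_j(f)$ is defined in Section~\ref{sec:variation} as a supremum over test functions. For $\T$ with its standard metric I take this supremum to be
\[
\mathcal V_j(f)=\sup\Bigl\{\int f\,\partial_j\varphi\,dm:\ \varphi\ \text{smooth on a finite quotient},\ \|\varphi\|_\infty\le1\Bigr\}.
\]
I test with $\varphi=\operatorname{Re}(\theta\,\overline{\chi_k})$ and $\varphi=\operatorname{Im}(\theta\,\overline{\chi_k})$, where $|\theta|=1$. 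Since $\partial_j\overline{\chi_k}=-2\pi i k_j\overline{\chi_k}$, this gives
\[
2\pi|k_j|\,\bigl|\operatorname{Re}\bigl(\theta\,\widehat f(k)\bigr)\bigr|\le \mathcal V_j(f).
\]
Here $f$ is real, so real parts pass through the integral. Now choose $\theta$ so that $\theta\widehat f(k)=|\widehat f(k)|$. Multiplying by $a_j$ and summing over $j\le n$ yields
\[
2\pi\,\ell_a(k)\,|\widehat f(k)|\le\sum_j a_j\mathcal V_j(f)=\Va(f).
\]
The step needing care is matching the normalization of the test fields in Section~\ref{sec:variation} (in particular sup-norm one for a vector field along the single coordinate). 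If the convention there differs, the constant $2\pi$ is adjusted accordingly.

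\textbf{(i)$\Leftrightarrow$(ii).} Assume $a_j\to\infty$ and fix $R>0$. There is $N$ with $a_j>R$ for $j>N$, so any $k$ with $\ell_a(k)\le R$ is supported in $\{1,\dots,N\}$. For such $k$ we have $|k_j|\le R/\min_{i\le N}a_i$, and only finitely many $k$ satisfy this. Conversely, if $a_j\not\to\infty$, pick a subsequence $a_{j_r}\le C$. The distinct unit vectors $e_{j_r}$ then satisfy $\ell_a(e_{j_r})\le C$, so the set for $R=C$ is infinite.

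\textbf{(ii)$\Leftrightarrow$(iii).} The characters form an orthonormal basis of $L^2(\T^\N,m)$ (Peter--Weyl, or density of trigonometric polynomials on a compact abelian group). Hence $\Lambda_a$ is diagonal, nonnegative and selfadjoint on its natural domain, and $(1+\Lambda_a)^{-1}$ is diagonal with eigenvalues $(1+\ell_a(k))^{-1}$. A diagonal operator is compact exactly when its eigenvalues tend to $0$ along the cofinite filter of the index set. For these eigenvalues, that means $\{k:\ell_a(k)\le R\}$ is finite for every $R$, which is (ii). I read the phrase about the spectrum as the usual statement that the spectrum is discrete with finite multiplicities, and the proof only uses the resolvent characterization.

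Theorem~\ref{thm:intro} shows that (i) is equivalent to its compactness assertion, which closes the equivalences. The main subtlety is the Fourier step above, but it is routine once the definition of $\mathcal V_j$ is fixed.
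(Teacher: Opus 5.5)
Your proposal is correct and follows essentially the same route as the paper. For (i)$\Leftrightarrow$(ii) you use the same support and coefficient count, and the same infinitely many coordinate characters of bounded length. For (ii)$\Leftrightarrow$(iii) you use the diagonal resolvent $(I+\Lambda_a)^{-1}$ with eigenvalues $(1+\ell_a(k))^{-1}$. For the Fourier bound you test $\mathcal V_j$ against real trigonometric functions with a chosen phase, then multiply by $a_j$ and sum over the support of $k$. One cosmetic point: the partial sum $\sum_{j\le n}a_j\mathcal V_j(f)$ is bounded by $\Va(f)$, not equal to it, and that bound is all you need.
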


The notion of Fourier multiplier, of Fourier transform and other classical notions of Fourier analysis can be found in \cite{rudin}. It should be pointed out that the main estimate of Theorem \ref{thm:intro} (and consequently also of Theorem \ref{thm:dual}) is explicit. Write $E_nf$ for Haar averaging over the closed normal subgroup $1^n\times K^{\{n+1,n+2,\ldots\}}$, and consider the geodesic distance $d_K(1,h)$ on $K$ where $h \in K$. Put
\[
c_K=\int_K d_K(1,h)\,dm_K(h) \qquad \mbox{and} \qquad
\alpha_n=\inf_{j>n}a_j.
\]
Whenever $\alpha_n>0$,
\begin{equation}\label{eq:introtail}
\|f-E_nf\|_1\leq c_K\sum_{j>n}\mathcal V_j(f)
\leq\frac{c_K}{\alpha_n}\Va(f).
\end{equation}
Here is clear that the finite   dimensional compactness handles the quotient $K^n$, and \eqref{eq:introtail} controls the remaining coordinates. Conversely, if the weights stay bounded along infinitely many coordinates, copies of a fixed domain placed in distinct coordinates yield a uniform family of characteristic functions with bounded perimeter. This is a qualitative justification for  \eqref{newperimeter}.

However we may go ahead and consider different families of pro-Lie groups which are locally compact. For a concrete noncompact extension of Theorems \ref{thm:intro} and \ref{thm:dual}, let $d\geq1$ and consider
\begin{equation}\label{compactlygenerated}
\Gamma=\R^d\times K^{\N} \qquad \mbox{with} \ \ d\mu=dx\,dm.
\end{equation}
 These groups are also  known as \textit{compactly generated locally compact groups}, see \cite{prolie, hofmor,  kazeem2}.  Define $\mathcal V_0$ by the ordinary Euclidean divergence and Euclidean norm on its vector  fields, and define $\mathcal V_j$ as before for $j \ge1$. Now adjust our previous definition, considering
\begin{equation}\label{adjustedfuncationals}
\mathcal W_a(f)=\mathcal V_0(f)+\sum_{j\geq1}a_j\mathcal V_j(f).
\end{equation}
These functionals are lower semicontinuous and invariant under the left and right translations of $\Gamma$ and allow us to have a significant generalization of Theorem \ref{thm:intro}.

\begin{theorem}[Third Main Theorem]\label{thm:tight}
Suppose $a_j\to\infty$, $\Gamma$ as in \eqref{compactlygenerated}, $\mathcal{W}_a(f)$ as in \eqref{adjustedfuncationals} and that $\mathcal F\subset L^1(\Gamma, \mu)$ satisfies
\[
\sup_{f\in\mathcal F}\bigl(\|f\|_1+\mathcal W_a(f)\bigr)<\infty.
\]
Then $\mathcal F$ is relatively compact in $L^1(\Gamma, \mu)$ if and only if it is uniformly tight in the Euclidean variable:
\[
\lim_{R\to\infty}\sup_{f\in\mathcal F}
\int_{\{|x|>R\}\times K^{\N}}|f|\,d\mu=0.
\]
\end{theorem}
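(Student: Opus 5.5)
Necessity is the easy direction, and I would dispose of it first. If $\mathcal F$ is relatively compact in $L^1(\Gamma,\mu)$, then for every $\varepsilon>0$ it admits a finite $\varepsilon$-net $f_1,\dots,f_N$. Each single $f_i\in L^1$ is tight by dominated convergence, since $\int_{\{|x|>R\}\times K^\N}|f_i|\,d\mu\to0$ as $R\to\infty$. Since $\int_{\{|x|>R\}\times K^{\N}}|f|\leq \|f-f_i\|_1+\int_{\{|x|>R\}\times K^{\N}}|f_i|$, uniform tightness follows. No use of $\mathcal W_a$ is needed here.

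For sufficiency I would follow the structure of the proof of Theorem \ref{thm:intro}, adding a Euclidean cutoff.

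\textbf{Step 1: cutoff.} Fix $\varepsilon>0$ and choose $R$ from tightness so that the tail mass is below $\varepsilon$ uniformly on $\mathcal F$. Take a smooth $\phi_R(x)$ with $0\le\phi_R\le1$, $\phi_R=1$ on $|x|\le R$, support in $|x|\le R+1$ and $|\nabla\phi_R|\le 2$, and set $g=\phi_R f$. Then $\|f-g\|_1<\varepsilon$. By the Leibniz rule for distributional variation, $\mathcal V_0(g)\le \mathcal V_0(f)+2\|f\|_1$. Moreover $\mathcal V_j(g)\le\mathcal V_j(f)$ for $j\ge1$, because $\phi_R$ does not depend on the $K$-coordinates and $0\le\phi_R\le1$; I would verify this by duality, multiplying test fields in the $j$th coordinate by $\phi_R$. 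So $\{\phi_R f: f\in\mathcal F\}$ has bounded $\|\cdot\|_1+\mathcal W_a$ and is supported in $B_{R+1}\times K^\N$.

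\textbf{Step 2: Haar averaging.} I would apply the tail estimate \eqref{eq:introtail} fiberwise in $x$. The averaging $E_n$ acts only on the $K^\N$ factor, so for a.e. $x$ it gives $\int_{K^\N}|g(x,\cdot)-E_ng(x,\cdot)|\,dm\le c_K\sum_{j>n}\mathcal V_j(g(x,\cdot))$. Integrating in $x$ and using that the variation of $g$ in coordinate $j$ dominates the integral of the fiberwise variations (again by a duality/Fubini argument with test fields in the $j$th coordinate, as in Section~\ref{sec:variation}) yields $\|g-E_ng\|_1\le (c_K/\alpha_n)\mathcal W_a(g)$. Since $a_j\to\infty$, choosing $n$ large makes this smaller than $\varepsilon$ uniformly.

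\textbf{Step 3: finite-dimensional compactness.} The functions $E_ng$ are functions on the finite-dimensional compact-by-Euclidean manifold $B_{R+1}\times K^n\subset\R^d\times K^n$ with bounded $L^1$ norm. Their total variation is bounded by $\mathcal V_0(g)+\sum_{j\le n}\mathcal V_j(g)$ (up to a constant comparing the product Riemannian metric with the sum of coordinate variations), because averaging does not increase variation, by convexity and lower semicontinuity. Classical BV compactness on a bounded Lipschitz domain of a Riemannian manifold (\cite{afp}) then gives a finite $\varepsilon$-net. Hence $\mathcal F$ admits a finite $3\varepsilon$-net, so it is totally bounded in the complete space $L^1$, and therefore relatively compact.

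The main obstacle I expect is Step 2: justifying the fiberwise application of \eqref{eq:introtail} and the inequality $\int\mathcal V_j(g(x,\cdot))\,dx\le\mathcal V_j(g)$. This requires a measurable choice of near-optimal fiberwise test fields, or alternatively a direct re-proof of \eqref{eq:introtail} on $\Gamma$. For that re-proof I would estimate $\|g-g(\cdot\, h)\|_1\le d_K(1,h_j)\mathcal V_j(g)$ for translations $h$ in coordinate $j$, by mollification, and then average over $h$, which avoids disintegration entirely. I would try this second route first.
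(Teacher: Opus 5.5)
Your proposal is correct and follows the paper's structure. Necessity is by a finite $L^1$ net. For sufficiency you use the tail estimate $\|g-E_ng\|_1\le (c_K/\alpha_n)\mathcal W_a(g)$, obtained as you intend to do first: rerun Lemmas~\ref{lem:translate} and~\ref{lem:tail} on $\Gamma$ with the Euclidean variable retained. You are right to avoid the fiberwise route, which would need a measurable selection of test fields.

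The difference is where tightness enters. The paper does not truncate. At a fixed stage $n$ it mollifies on $\R^d\times K^n$ and uses the translation estimates for uniform approximation. It then applies Arzel\`a--Ascoli on compact sets and notes that convolution with a kernel of bounded Euclidean support only enlarges the radius in the tightness estimate, so the smoothed family's tails stay uniformly small.

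You instead cut off first with $\phi_R$. This costs the Leibniz term in $\mathcal V_0(\phi_Rf)\le\mathcal V_0(f)+2\|f\|_1$, and uses $\mathcal V_j(\phi_Rf)\le\mathcal V_j(f)$. The finite-stage family then lies in the fixed compact set $\overline{B}_{R+1}\times K^n$ with uniformly bounded total variation. The comparison with $\mathcal V_0+\sum_{j\le n}\mathcal V_j$ holds with constant one, since a test field of product norm at most one has every block of norm at most one. You can then quote compactness of compactly supported BV families.

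Your route is more modular and makes the tail bookkeeping trivial. The paper's avoids the cutoff computation and shows directly the translation-plus-tightness mechanism by which that BV compactness theorem is itself usually proved.
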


While Section \ref{section1} presented the main results, Section \ref{section2} shows the relevance of the structure of topological group with respect to the classical approach in general topological spaces. Section \ref{sec:haar} recalls  facts which are probably known for Haar measures and projective limits, but this section, along with Appendix \ref{app:preserved-background}, contains background which can be useful for readers. Section \ref{sec:variation} deals with \eqref{newperimeter} and how this should be treated in terms of the crucial lemmas of the proof of Theorem \ref{thm:intro} later in Section \ref{proof1}. There is a separate criterion of isoperimetric comparison in Section \ref{isoperimetricomparison}, which shows the important role of quotients and projective limits in the constructions. The proof of Theorem \
\ref{thm:dual} is placed in Section \ref{proof2}, while that of Theorem \ref{thm:tight} in Section \ref{proof3}. We end with Section \ref{subfinsler}, illustrating an interesting relation with the theory of Carnot groups (see \cite{ledonne}) and this, along with the problems which originally emerged in our investigations, show once more how the condition of finite dimension force us to change drastically methods and techniques of investigation.

\section{Why the group structure matters and the classical  perimeter fails}\label{section2}

The compactness step using finite   dimensional approximation is an analytic principle with applications beyond groups. The results here carry additional algebraic content. Haar probability fixes the measure compatibly with homomorphisms; multiplication produces measures preserving translations and one-parameter subgroups; normal subgroups give quotient groups and conditional expectations; and invariant inner products make the variation equivariant under both left and right translations. This scenario is absent in topological spaces which are not topological groups.

These structures enter proved conclusions, rather than only the terminology. Proposition~\ref{prop:averaging} gives an equivariant conditional expectation for every compact normal subgroup. Theorem~\ref{thm:quotient} proves exact preservation of variation and perimeter under pullback by compact Lie quotient homomorphisms with their quotient metrics. Corollary~\ref{cor:profiles} compares isoperimetric profiles along those homomorphisms. In the abelian example, Theorem~\ref{thm:dual} expresses the compactness condition on the Pontryagin dual group. Neither a general continuous map between compact spaces nor local compactness alone supplies these conclusions.

Compactness of the group is also consequential: a nonzero Haar measure has finite total mass precisely when its group is compact. On noncompact groups translation invariance produces families escaping to infinity; Section~\ref{proof3} proves a tightness version on $\R^d\times K^{\N}$. Thus the role of compactness is not interchangeable with the existence of an arbitrary finite measure on a topological space. Haar Theorem \cite[Theorem 2.8]{hofmor}, Peter--Weyl Theorem \cite[Theorem 3.7]{hofmor} and general translation criteria for $L^p$-compactness are classical \cite{heyer,hofmor,krukowski}. The results below use these structures to formulate and prove a precise weighted perimeter criterion, its equivariance and its behavior under group quotients. The variation is part of the specified geometric data: it is not asserted to be determined by the underlying topology, or to equal a metric BV perimeter for every compatible metric. The main theorem concerns products of a fixed connected compact Lie group, not all pro-Lie groups.

The finite   dimensional geometry fits the sub-Finsler framework developed by Le Donne \cite{ledonne}. In fact,  Section~\ref{subfinsler} identifies the precise tangent norm whose divergence variation is our finite   stage functional, and compares our statements with the scenario of Carnot groups and limits of Carnot--Carath\'eodory spaces. The infinite coordinate estimate \eqref{eq:introtail} is additional to those geometric results. The sufficiency argument is an application of classical compactness principles once this estimate is proved, and isoperimetric existence is a direct method consequence.

It's also appropriate to observe why the classical notion of perimeter fails in our present context of study. We tried several times (and with different notions of measures and perimeters), getting very often to very strange constructions, which brought us  to the finite dimensional case at the end. First of all,  on a compact Lie group with normalized Riemannian Haar measure, the one-block variation agrees with normalized classical perimeter. On a finite product, the weighted block variation is anisotropic but equivalent to ordinary BV; in the infinite product the quantitative estimate for averaging is the essential extra ingredient. Somehow this approach seemed to be a meaningful way to interpret properly our original attempts.

\begin{remark}[Invariant and Noninvariant Finite Volume]
The volume of a left   invariant Riemannian metric on a Lie group is a left Haar measure. Proposition~\ref{prop:finitehaar} below implies that the finite total volume  forces compactness. A noncompact manifold, including a manifold underlying a Lie group, can carry a finite volume noninvariant Riemannian metric. That situation does not furnish finite Haar measure, and should not be confused with the invariant construction used in the present paper.
\end{remark}

\begin{remark}\label{rem:metric} Let's see better what a product metric does give and what does not give. If $d_j$ are complete compatible metrics, then
\[
d(x,y)=\sum^\infty_{j=1}\frac{1}{2^j}\min\{d_j(x_j,y_j),1\}
\]
is a complete compatible product metric, and its restriction to a closed projective limit is complete. Its local form can be an $\ell^1$-norm, so it need not be a Riemannian distance. Nor does it imply a numerical formula for a volume of the projective limit. For example, in the constant identity system of circles of circumference one, take $d_j=d_0/2^j$. On the diagonal projective limit the displayed metric is $d_0/3$, of Riemannian length $1/3$, whereas the factor lengths tend to zero. Haar probability remains consistently of mass one. This separates metric construction from measure normalization.
\end{remark}

\section{Compact groups, projective limits and Haar averaging}\label{sec:haar}
 A pro-Lie group is a topological group isomorphic to a closed subgroup of a product of finite dimensional real Lie groups; equivalently it is an appropriate projective limit of Lie groups \cite{prolie}. In particular compact groups are pro-Lie groups. We use strict compact projective systems: bonding homomorphisms and limit projections are surjective.

For a directed set $I$, continuous homomorphisms $q_{ji}:G_i\to G_j$ for $i\geq j$ satisfy $q_{ii}=\operatorname{id}$ and $q_{kj}q_{ji}=q_{ki}$ whenever $i\geq j\geq k$. Their projective limit is
\[
 \varprojlim_{i \in I} G_i=\left\{(x_i)\in\prod_{i\in I} G_i : q_{ji}(x_i)=x_j\ (i\geq j)\right\}.
\]
For compact Hausdorff factors this is a closed compact subgroup of the product. If the bonding maps are onto,  it turns out that every projection $p_i: \underset{i \in I}\varprojlim \ G_i \to G_i$ is onto. The identities are $p_j=q_{ji}p_i$ and note that the pullback of functions has the reverse direction.

\begin{proposition}\label{prop:measure}
Let  $G_i$ be compact groups with surjective bonding homomorphisms such that $G=\underset {i \in I} \varprojlim \ G_i$. If finite positive Radon measures $\mu_i$ satisfy $(q_{ji})_*\mu_i=\mu_j$, there is a unique finite positive Radon measure $\mu$ on $G$ with $(p_i)_*\mu=\mu_i$. For normalized Haar measures the extension is normalized Haar measure. 
\end{proposition}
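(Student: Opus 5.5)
The plan is to build $\mu$ as a positive linear functional on $C(G)$, using the Riesz representation theorem together with a Stone--Weierstrass density argument. Let $\mathcal A=\bigcup_{i\in I} p_i^*C(G_i)\subset C(G)$ be the set of cylinder functions $\varphi\circ p_i$.

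First I check that $\mathcal A$ is a unital subalgebra. Given $\varphi\circ p_i$ and $\psi\circ p_j$, choose $k\geq i,j$. Since $p_i=q_{ik}p_k$, both functions are pullbacks of continuous functions on $G_k$, so sums and products remain in $\mathcal A$. The algebra separates points: if $x\neq y$ in $G$, then $x_i\neq y_i$ for some $i$, and Urysohn on $G_i$ gives a separating $\varphi$. By Stone--Weierstrass, $\mathcal A$ is uniformly dense in $C(G)$.

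Next I define $L(\varphi\circ p_i)=\int_{G_i}\varphi\,d\mu_i$.

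\emph{Well-definedness is the main point.} Suppose $\varphi\circ p_i=\psi\circ p_j$. Pass to $k\geq i,j$, so that $(\varphi\circ q_{ik})\circ p_k=(\psi\circ q_{jk})\circ p_k$. Because $p_k$ is surjective (this is where surjectivity of the bonding maps enters), $\varphi\circ q_{ik}=\psi\circ q_{jk}$ on $G_k$. The compatibility $(q_{ik})_*\mu_k=\mu_i$ then gives
\[
\int_{G_i}\varphi\,d\mu_i=\int_{G_k}\varphi\circ q_{ik}\,d\mu_k=\int_{G_k}\psi\circ q_{jk}\,d\mu_k=\int_{G_j}\psi\,d\mu_j .
\]
Linearity follows by the same passage to a common index $k$. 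Positivity and boundedness also use surjectivity of $p_k$: $\sup|\varphi\circ p_k|=\sup|\varphi|$, hence $|L(f)|\leq \mu_k(G_k)\|f\|_\infty$. All $\mu_i(G_i)$ equal a common value $c$, because pushforward preserves total mass and $I$ is directed. So $L$ is bounded with norm at most $c$ and extends uniquely to $C(G)$; positivity passes to the extension by density, approximating $f\geq0$ by $\max(g,0)$ with $g\in\mathcal A$, which again lies in $\mathcal A$. Riesz then yields a finite positive Radon measure $\mu$ with $(p_i)_*\mu=\mu_i$, since $\int\varphi\circ p_i\,d\mu=L(\varphi\circ p_i)$. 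For uniqueness, any other such $\mu'$ agrees with $\mu$ on $\mathcal A$, hence on $C(G)$ by density, hence as a Radon measure.

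For the Haar statement, note first that $(q_{ji})_*m_{G_i}$ is a translation-invariant probability measure on $G_j$ because $q_{ji}$ is a surjective homomorphism. By uniqueness of Haar measure it equals $m_{G_j}$, so the hypothesis holds. To show the extension $\mu$ is left invariant, fix $g\in G$ and observe that $(\lambda_g)_*\mu$ also pushes forward to $\mu_i$ under every $p_i$. Indeed, $p_i\circ\lambda_g=\lambda_{p_i(g)}\circ p_i$ and $m_{G_i}$ is invariant. By uniqueness $(\lambda_g)_*\mu=\mu$, and $\mu(G)=1$, so $\mu$ is normalized Haar measure on $G$.
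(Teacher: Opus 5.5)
Your proof is correct and follows the paper's argument. Like the paper, you use the cylinder algebra $\bigcup_i p_i^*C(G_i)$, define $L$ and show it is well defined by passing to a common upper index (you correctly pinpoint where surjectivity of $p_k$ is used), then apply Stone--Weierstrass density, the bound by the common total mass, and Riesz representation for existence and uniqueness. The Haar step differs only cosmetically: you deduce left invariance by applying uniqueness of the extension to $(\lambda_g)_*\mu$, whereas the paper checks invariance of the functional on cylinder functions and extends by density, and the two are equivalent.
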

\begin{proof}
All factor masses equal a finite number $M$. On the algebra $\mathcal A=\underset{i \in I}\bigcup p_i^*C(G_i,\R)$ define
\[
L(f\circ p_i)=\int_{G_i}f\,d\mu_i.
\]
Lifting two representations to a common upper stage proves that $L$ is well defined and that $\mathcal A$ is an algebra. It contains constants and separates points, so Stone--Weierstrass makes it uniformly dense in $C(G,\R)$. Positivity and $|L(F)|\leq M\|F\|_\infty$ extend $L$ to a positive continuous functional on $C(G,\R)$. Riesz representation proves existence and uniqueness. If the factor measures are normalized Haar, the functional is invariant under left translations on cylinder functions, hence on every continuous function. This proves the last assertion.
\end{proof}

Projective limits and Haar measure constructions also occur in probabilistic questions about compact and locally compact groups; see \cite{kazeem1,kazeem2}. Proposition~\ref{prop:measure} supplies the precise statement needed for our investigations. Write $L_gf(x)=f(g^{-1}x)$ and $R_gf(x)=f(xg)$ and note that compact groups are unimodular, so both operators $L_g$ and $R_g$ are isometries on $L^p(G,m_G)$, where $1\leq p\leq\infty$  and $m_G$ denotes the Haar measure on $G$, see \cite{heyer, hilgertneeb,  prolie, hofmor} for details.

\begin{proposition}\label{prop:averaging}
Let $N$ be a closed normal subgroup of a compact group $G$, and $q:G\to G/N$. Then $q_*m_G=m_{G/N}$ and
\[
E_Nf(x)=\int_N f(xh)\,dm_N(h)
\]
is an orthogonal    projection on $L^p(G,m_G)$ of norm one   (with $1\leq p<\infty$) onto the functions pulled back from $G/N$. The formula is defined first on continuous functions and then extended by $L^p$ contraction. It is the conditional expectation onto the quotient sigma-algebra and commutes with every $L_g$ and $R_g$.
\end{proposition}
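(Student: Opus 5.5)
I will first establish the measure statement $q_*m_G=m_{G/N}$. The pushforward $q_*m_G$ is a Radon probability measure on the compact group $G/N$. It is left invariant because $q(gx)=q(g)q(x)$ and $m_G$ is left invariant. Uniqueness of normalized Haar measure then gives $q_*m_G=m_{G/N}$.

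Next I will define $E_N$ on $C(G,\R)$ and record its elementary properties there.
\begin{enumerate}
\item[(a)] $E_Nf$ is continuous, because $f$ is uniformly continuous on compact $G$.
\item[(b)] $E_Nf$ is constant on cosets: $E_Nf(xn)=\int_N f(xnh)\,dm_N(h)=E_Nf(x)$ by left invariance of $m_N$. Hence $E_Nf=\tilde f\circ q$ for a continuous $\tilde f$ on $G/N$.
\item[(c)] If $f=u\circ q$, then $E_Nf=f$, so $E_N^2=E_N$.
\item[(d)] Jensen's inequality (or Minkowski's integral inequality) gives $|E_Nf(x)|^p\le\int_N|f(xh)|^p\,dm_N(h)$. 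Integrating over $G$ and using Fubini, $\|E_Nf\|_p^p\le\int_N\|R_hf\|_p^p\,dm_N(h)=\|f\|_p^p$, since right translations are isometries by unimodularity.
\end{enumerate}
Density of $C(G)$ in $L^p$ for $1\le p<\infty$ then extends $E_N$ to a contraction of $L^p$. Its norm is exactly one, since $E_N\one=\one$.

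The range of the extension is the closure of $\{u\circ q : u\in C(G/N)\}$. By the pushforward identity, $u\mapsto u\circ q$ is an isometry $L^p(G/N)\to L^p(G)$. So the range is exactly the space of pullbacks of $L^p(G/N)$ functions, and idempotency passes to the closure.

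For the conditional expectation property I will verify
\[
\int_G E_Nf\cdot(u\circ q)\,dm_G=\int_G f\cdot(u\circ q)\,dm_G
\]
for continuous $f$ and $u$. Since $u\circ q$ is right $N$-invariant, the right side equals $\int_N\int_G f(xh)\,u(q(xh))\,dm_G(x)\,dm_N(h)$, by right invariance of $m_G$ and Fubini; unfolding the definition of $E_N$ gives the left side. The identity extends by density to $f\in L^1$ and bounded measurable $u$. This is precisely the defining property of $\mathbb E[f\mid q^{-1}\mathcal B(G/N)]$. In $L^2$ the same identity says that $E_N$ is self-adjoint and idempotent, hence an orthogonal projection. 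For $p\ne2$, "orthogonal" will be read as this conditional-expectation property.

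For commutation with right translations I compute $E_N(R_gf)(x)=\int_N f(xhg)\,dm_N(h)$. Writing $hg=g(g^{-1}hg)$, this becomes $\int_N f(xg\,g^{-1}hg)\,dm_N(h)$. Normality makes $h\mapsto g^{-1}hg$ a topological automorphism of $N$, which preserves $m_N$ by uniqueness of Haar measure, so the integral equals $R_gE_Nf(x)$. Commutation with $L_g$ is immediate, since $E_N$ acts by right multiplication. Both identities pass to $L^p$ by continuity. The only delicate point is the conjugation-invariance of $m_N$; everything else is routine Fubini and density.
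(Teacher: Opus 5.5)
Your proposal is correct and follows essentially the same route as the paper: the pushforward $q_*m_G$ is a left-invariant probability and hence Haar, Fubini with Haar invariance gives idempotence, contraction and the conditional-expectation identity against coset-constant functions, and commutation with $R_g$ comes from the $m_N$-preserving automorphism $h\mapsto g^{-1}hg$ of $N$. You also supply details the paper leaves implicit: Jensen for the $L^p$ contraction, the isometric pullback $L^p(G/N)\to L^p(G)$ to identify the range, self-adjointness in $L^2$, and an explicit reading of ``orthogonal'' for $p\neq 2$.
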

\begin{proof}
The pushforward is a left   invariant probability, hence Haar. Fubini and Haar invariance give $E_N^2=E_N$, positivity, contraction and preservation of integrals against bounded functions constant on cosets. These are the conditional-expectation identities. Commutation with $L_g$ is immediate. For $R_g$, substitute $h\mapsto g^{-1}hg$; normality keeps the integration inside $N$, and its normalized Haar measure is invariant under this automorphism. These arguments also identify the range with quotient functions, modulo null sets.
\end{proof}

\begin{proposition}\label{prop:representations}
Let $\pi:G\to U(V)$ be a finite   dimensional irreducible unitary representation of a compact group with $V$ finite dimensional complex (or real) vector space and  $N$  closed normal subgroup of $G$. On every matrix coefficient of $\pi$, the operator $E_N$ is the identity if $N\subseteq\ker\pi$, and is zero otherwise.
\end{proposition}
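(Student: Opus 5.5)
Since $E_N$ is linear, it is enough to compute it on the single function $x\mapsto \langle \pi(x)u,v\rangle$ for fixed $u,v\in V$. Matrix coefficients are continuous, so we may use the defining formula of Proposition~\ref{prop:averaging} directly. For $x\in G$,
\[
E_N\langle \pi(\cdot)u,v\rangle(x)=\int_N\langle \pi(x)\pi(h)u,v\rangle\,dm_N(h)=\langle \pi(x)Pu,v\rangle ,
\qquad P=\int_N\pi(h)\,dm_N(h).
\]
The integral defining $P$ is a Bochner (or weak) integral in the finite dimensional space $\mathrm{End}(V)$, and moving $\pi(x)$ and the inner product outside is legitimate by linearity and continuity. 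The statement therefore reduces to showing that $P=\mathrm{id}_V$ when $N\subseteq\ker\pi$, and $P=0$ otherwise.

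The first case is immediate. If $N\subseteq\ker\pi$, then $\pi(h)=\mathrm{id}$ for every $h\in N$. Since $m_N$ is a probability measure, $P=\mathrm{id}$, and $E_N$ fixes the coefficient.

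In general, $P$ is the orthogonal projection onto the space $V^N$ of $N$-fixed vectors. The standard checks are:
\begin{itemize}
\item $\pi(h_0)P=P=P\pi(h_0)$ for $h_0\in N$, by translation invariance of $m_N$;
\item hence $P^2=P$;
\item $P^*=P$, using $\pi(h)^*=\pi(h^{-1})$ and inversion invariance of Haar measure on the compact group $N$;
\item $Pw=w$ for $w\in V^N$, while every vector in the range of $P$ is $N$-fixed.
\end{itemize}

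Next we show that $V^N$ is $G$-invariant, using normality of $N$. If $w\in V^N$, $g\in G$ and $h\in N$, then $g^{-1}hg\in N$, so
\[
\pi(h)\pi(g)w=\pi(g)\pi(g^{-1}hg)w=\pi(g)w .
\]
Thus $\pi(g)w\in V^N$.

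By irreducibility, $V^N$ is either $\{0\}$ or all of $V$.
\begin{itemize}
\item If $V^N=V$, then $\pi(h)=\mathrm{id}$ for all $h\in N$, that is, $N\subseteq\ker\pi$.
\item Otherwise $N\not\subseteq\ker\pi$ forces $V^N=\{0\}$. Then $P=0$, and $E_N$ kills every matrix coefficient.
\end{itemize}

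The only delicate point is this invariance argument, which is where normality is essential. The rest consists of routine manipulations of Haar integrals on $N$.
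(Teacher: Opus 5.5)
Your proof is correct and follows the same route as the paper. Both write $E_N$ of the coefficient as $\langle \pi(x)Pu,v\rangle$ with $P=\int_N\pi(h)\,dm_N(h)$ the orthogonal projection onto $V^N$, use normality to show $V^N$ is $G$-invariant, and conclude by irreducibility; you simply spell out the routine Haar-integral checks and the conjugation identity that the paper leaves implicit.
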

\begin{proof}
The operator $P_N=\int_N\pi(h)\,dm_N(h)$ is the orthogonal projection onto the $N$-fixed vectors. Normality makes this subspace $G$-invariant. Irreducibility implies that it is either $V$ or zero. For $c(x)=\langle\pi(x)v,w\rangle$ one has $E_Nc(x)=\langle\pi(x)P_Nv,w\rangle$, proving the assertion.
\end{proof}

Thus quotient averaging retains precisely the irreducible representations factoring through the quotient. This is a standard consequence of representation theory, and gives an algebraic interpretation of the approximation in Lemma \ref{thm:tower}. The regular actions on $L^2(G,m_G)$ are unitary and this Hilbert space is a space of functions on $G$, but unfortunately its existence supplies no Hilbert manifold structure on $G$.

\begin{lemma}\label{thm:tower}
Let $G$ be compact metrizable group with Haar measure $m_G$ and  $N_1\supseteq N_2\supseteq\cdots$  closed normal subgroups such that $\underset{n \in \N} \bigcap N_n=1$.   Then 
\[
E_n f(x):= E_{N_n}f(x)=\int_{N_n} f(xh)\,dm_{N_n}(h)
\]
converges to $f$ in $L^1(G,m_G)$. Moreover, a family $\mathcal F\subset L^1(G,m_G)$ is relatively compact if and only if the following two conditions are satisfied:
\begin{enumerate}
\item[{\rm (i).}] $E_n\mathcal F$ is relatively compact for each $n$;
\item[{\rm (ii).}] $\sup_{f\in\mathcal F}\|f-E_nf\|_1\to0$.
\end{enumerate}
All approximations commute with left and right translations.
\end{lemma}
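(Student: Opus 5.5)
The plan has three parts: convergence of $E_nf$ to $f$, then the compactness criterion in both directions, then equivariance. Throughout we use Proposition~\ref{prop:averaging}, which says each $E_n=E_{N_n}$ is a positive contraction on $L^1(G,m_G)$ and commutes with every $L_g$ and $R_g$.

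\emph{Convergence.} By uniform contraction and the density of $C(G,\R)$ in $L^1$, it suffices to prove $\|E_nf-f\|_\infty\to0$ for continuous $f$. Fix such $f$ and fix $\varepsilon>0$. Since $G$ is compact, $f$ is uniformly continuous, so there is an identity neighbourhood $U$ with $|f(xh)-f(x)|<\varepsilon$ for all $x\in G$ and $h\in U$.

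The key step, and the only genuinely topological one, is to show that $N_n\subseteq U$ for $n$ large. The sets $N_n\setminus U$ are closed in the compact space $G$, hence compact, and they are decreasing. Their intersection is $\bigl(\bigcap_n N_n\bigr)\setminus U=\{1\}\setminus U=\emptyset$. By the finite intersection property some $N_n\setminus U$ is empty. Since the $N_n$ decrease, $N_m\subseteq U$ for all $m\geq n$.

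For such $m$,
\[
|E_mf(x)-f(x)|\leq\int_{N_m}|f(xh)-f(x)|\,dm_{N_m}(h)\leq\varepsilon
\quad\text{for all }x\in G.
\]
For general $f\in L^1$, choose continuous $g$ with $\|f-g\|_1<\varepsilon$. Then
\[
\|E_nf-f\|_1\leq\|E_n(f-g)\|_1+\|E_ng-g\|_1+\|g-f\|_1\leq 2\varepsilon+\|E_ng-g\|_\infty,
\]
and the last term tends to $0$.

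\emph{Sufficiency of (i) and (ii).} We show $\mathcal F$ is totally bounded in $L^1$, which suffices because $L^1$ is complete. Given $\varepsilon>0$, use (ii) to pick $n$ with $\sup_{f\in\mathcal F}\|f-E_nf\|_1<\varepsilon$. By (i), $E_n\mathcal F$ is relatively compact, so it has a finite $\varepsilon$-net. That net is then a $2\varepsilon$-net for $\mathcal F$.

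\emph{Necessity.} Suppose $\mathcal F$ is relatively compact.
\begin{itemize}
\item Condition (i) holds because $E_n$ is continuous, so it maps the compact set $\overline{\mathcal F}$ onto a compact set containing $E_n\mathcal F$.
\item For (ii), the operators $E_n$ are uniformly bounded (all of norm one) and converge strongly to the identity by the first part. A standard $\varepsilon/3$ argument shows that strong convergence of a uniformly bounded family is uniform on compact sets. Concretely, cover $\overline{\mathcal F}$ by finitely many $\varepsilon$-balls centred at $f_1,\dots,f_r$. Then for every $f\in\mathcal F$, taking the $f_i$ with $\|f-f_i\|_1<\varepsilon$,
\[
\|f-E_nf\|_1\leq 2\varepsilon+\max_{1\leq i\leq r}\|f_i-E_nf_i\|_1,
\]
and the maximum tends to $0$ as $n\to\infty$.
\end{itemize}

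\emph{Equivariance.} The final sentence of the lemma, that all approximations commute with left and right translations, is exactly the commutation statement of Proposition~\ref{prop:averaging} applied to each $N_n$.

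The main obstacle is the topological step $N_n\subseteq U$ eventually. It needs compactness of $G$, and it is also the point where the nesting of the subgroups is used. Metrizability is not actually needed here, beyond guaranteeing that a countable decreasing sequence with trivial intersection exists.
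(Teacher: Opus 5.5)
Your proof is correct and follows essentially the same route as the paper. Both arguments get eventual containment of $N_n$ in each identity neighbourhood from the nested compact sets $N_n\setminus U$, use uniform continuity plus density and contraction for $L^1$ convergence, a finite-net argument for the necessity of (ii), continuity of $E_n$ for (i), total boundedness in the complete space $L^1$ for sufficiency, and Proposition~\ref{prop:averaging} for equivariance. The only detail to add is that $U$ should be taken open (shrink it to its interior if necessary), so that $N_n\setminus U$ is indeed closed.
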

\begin{proof}
The groups $N_n$ are eventually contained in each identity neighborhood: otherwise nested compact sets $N_n\setminus U$ would have nonempty intersection. Uniform continuity gives $E_nf\to f$ uniformly for continuous $f$. Density and contraction give strong $L^1$ convergence. On a compact subset of $L^1$ this convergence is uniform, by a finite net argument and the bound $\|I-E_n\|\leq2$. This proves necessity of (ii); continuity proves (i). Conversely, first approximate $\mathcal F$ uniformly by $E_n\mathcal F$ and then by a finite net in that projected family. Thus $\mathcal F$ is totally bounded in the complete space $L^1$. Equivariance follows from Proposition~\ref{prop:averaging}.
\end{proof}

Peter--Weyl Theorem supplies such a tower with compact Lie quotients for every compact metrizable group. Indeed, finite dimensional continuous unitary representations separate points. Given a countable identity neighborhood basis $U_n$, compactness of $G\setminus U_n$ gives finitely many representations whose kernels have intersection inside $U_n$. Finite intersections of all kernels chosen up to stage $n$ give $N_n$. The quotient is a closed subgroup of a finite product of unitary groups, hence a compact Lie group. The separating representation input is classical; see \cite{hilgertneeb, hofmor, kubisturek}. Lemma \ref{thm:tower} alone does not estimate its approximation error: \eqref{eq:introtail} supplies that estimate for the product groups considered here.

\section{Weighted variation from invariant derivatives}\label{sec:variation}

Fix a nontrivial compact connected Lie group $K$ and an $\Ad(K)$-invariant inner product on the associated Lie algebra $\gk$. Such an inner product is obtained by averaging any inner product under the adjoint action. It defines a bi-invariant Riemannian metric $g_K$ with geodesic distance $d_K$. Its Riemannian volume, divided by $\vol_{g_K}(K)$, is $m_K$.

Let $X_1,\ldots,X_r$ be an orthonormal basis of $\gk$. On $G=K^{\N}$ let $X_{j,s}$ differentiate the $j$th coordinate by $x_j\mapsto x_j\exp(tX_s)$. A smooth cylinder function depends smoothly on finitely many coordinates; a cylinder vector field $\Phi=(\Phi_1,\ldots,\Phi_r)$ is an $\gk$-valued such function. Put
\[
\diver_j\Phi=\sum_{s=1}^rX_{j,s}\Phi_s,
\qquad |\Phi(x)|^2=\sum^r_{s=1}|\Phi_s(x)|^2.
\]
The one-parameter flows preserve Haar measure, so each $X_{j,s}$ has zero divergence relative to $m$ and integration by parts holds for cylinder functions.

\begin{definition}[The Weighted Perimeter on Compact Connected Groups]\label{def:variation} Given $G$ as per \eqref{infinitedimensionaliegroup}
and  $f\in L^1(G,m)$, we define
\begin{align*}
\mathcal V_j(f)&=\sup\left\{\left|\int_G f\,\diver_j\Phi\,dm\right|:
\Phi\text{ smooth cylinder},\ \|\Phi\|_\infty\leq1\right\},\\
\Va(f)&=\sum_{j\geq1}a_j\mathcal V_j(f),\qquad
\Pa(E)=\Va(\one_E),\qquad 0<a_j<\infty.
\end{align*}
Values in $[0,\infty]$ are allowed, and measurable sets are identified modulo $m$-null sets.
\end{definition}

Of course, Definition \ref{def:variation} formalizes \eqref{newperimeter}, using the notion of distributional derivative which is typical of the theory of BV-functions and of the geometric measure theory, see \cite{afp}. Note also that Definition \ref{def:variation} is independent of the chosen orthonormal basis. For a smooth cylinder $f$, integration by parts and smooth approximation of the normalized gradient give
\[
\mathcal V_j(f)=\int_G\left(\sum^r_{s=1}|X_{j,s}f|^2\right)^{1/2}dm.
\]
At a finite stage, $\Va$ is the sum of weighted block variations. Even there it generally differs from the usual product Riemannian variation, which combines all block gradients by a single Euclidean norm (its precise Finsler interpretation is given in Proposition~\ref{prop:finsler}). No Riemannian manifold structure on the infinite product is used. In fact, we may encounter difficulties to adapt the Alexandroff-Mazurkiewicz metric already to $\T^\N$ (see Appendix \ref{app:preserved-background} and  \cite[Theorem 24.11]{willard}) if we do not consider a notion of perimeter of the type of  Definition \ref{def:variation}.

\begin{proposition}[Equivariance and Lower Semicontinuity]\label{prop:equivariant}
For $g\in G=K^\N$ with $K$ compact connected Lie group and $f\in L^1(G,m)$,
\[
\mathcal V_j(L_gf)=\mathcal V_j(R_gf)=\mathcal V_j(f).
\]
Consequently $\Va$ and $\Pa$ are invariant under left and right translations. Every $\mathcal V_j$ and $\Va$ is lower semicontinuous in $L^1$. If $A_j$ averages over coordinate $j$ and $E_n$ over all coordinates after $n$, then
\[
\mathcal V_i(A_jf)\leq\mathcal V_i(f),\qquad
\mathcal V_i(E_nf)\leq\mathcal V_i(f),\qquad
\mathcal V_i(E_nf)=0\quad(i>n).
\]
\end{proposition}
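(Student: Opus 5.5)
The plan is to argue directly from the dual (supremum) form of $\mathcal V_j$ in Definition~\ref{def:variation}. The basic principle is that a translation, or an average of translations, acting on $f$ can be moved onto the test field $\Phi$. The class of admissible fields (smooth cylinder, $\|\Phi\|_\infty\le1$) must be preserved by the moved operator, and $\diver_j$ must commute with it.

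\emph{Left translations.} The derivations $X_{j,s}$ are right-multiplication flows, so they commute with every $L_g$. Left Haar invariance of $m$ gives
\[
\int L_gf\,\diver_j\Phi\,dm=\int f\,\diver_j(L_{g^{-1}}\Phi)\,dm.
\]
The field $L_{g^{-1}}\Phi$ is again smooth cylinder with the same sup norm. Taking suprema yields $\mathcal V_j(L_gf)\le\mathcal V_j(f)$, and the reverse inequality follows by replacing $g$ with $g^{-1}$.

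\emph{Right translations.} This is the step needing care, since $R_g$ does not commute with $X_{j,s}$. One has
\[
R_g(X_{j,s}\psi)(x)=\tfrac{d}{dt}\psi(x\exp(tX_s)_jg)=\tfrac{d}{dt}\psi(xg\exp(t\Ad(g_j^{-1})X_s)_j).
\]
Hence $X_{j,s}$ becomes $\sum_{s'}\Ad(g_j^{-1})_{s's}X_{j,s'}$. Set $\tilde\Phi(y)=\Ad(g_j)\Phi(yg^{-1})$, viewing $\Phi$ as $\gk$-valued. Then $\int R_gf\,\diver_j\Phi\,dm=\int f\,\diver_j\tilde\Phi\,dm$, using right invariance of $m$ (compact groups are unimodular). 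Because the inner product is $\Ad(K)$-invariant, $\Ad(g_j)$ is orthogonal, so $|\tilde\Phi|=|\Phi|$ pointwise. The field $\tilde\Phi$ is still smooth cylinder, and symmetry finishes this case. I expect this bookkeeping to be the main obstacle, in particular checking that $\diver_j$ transforms into $\diver_j$ of the rotated field. This is where bi-invariance is genuinely used. Invariance of $\Va$ and $\Pa$ then follows by summation.

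\emph{Lower semicontinuity.} For fixed admissible $\Phi$, the map $f\mapsto|\int f\,\diver_j\Phi\,dm|$ is $L^1$-continuous, because $\diver_j\Phi$ is a bounded continuous cylinder function. A supremum of continuous functions is lower semicontinuous, so each $\mathcal V_j$ is. Each partial sum $\sum_{j\le J}a_j\mathcal V_j$ is then lower semicontinuous, and $\Va$ is their increasing supremum, hence lower semicontinuous as well.

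\emph{Averaging.} Write $E_N f=\int_N R_hf\,dm_N(h)$ with $N$ either the $j$th coordinate subgroup or the tail subgroup. Fubini gives
\[
\int E_Nf\,\diver_i\Phi\,dm=\int_N\int R_hf\,\diver_i\Phi\,dm\,dm_N(h).
\]
By the right-translation identity above, each inner integral is at most $\mathcal V_i(R_hf)=\mathcal V_i(f)$ in absolute value, so $\mathcal V_i(A_jf),\mathcal V_i(E_nf)\le\mathcal V_i(f)$. Alternatively, one can move $E_N$ onto $\Phi$ as the average of the rotated fields $\tilde\Phi$, which remains admissible by convexity of the unit ball.

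\emph{Vanishing for $i>n$.} In this case $E_nf$ is invariant under right translation in coordinate $i$. Hence $\int E_nf\,X_{i,s}\Phi_s\,dm=-\tfrac{d}{dt}\big|_{t=0}\int R_{\exp(-tX_s)_i}E_nf\,\Phi_s\,dm=0$. Here I use the zero-divergence integration by parts for cylinder functions, extended to $L^1$ functions by density, with the flow acting continuously on $L^1$. This gives $\mathcal V_i(E_nf)=0$.
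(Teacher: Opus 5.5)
Your proposal is correct and follows essentially the paper's route: move the translation or average onto the test field, with left translations commuting with the left-invariant frame and right translation in coordinate $j$ rotating it orthogonally; obtain lower semicontinuity from suprema of $L^1$-continuous functionals; and get $\mathcal V_i(E_nf)=0$ for $i>n$ because a derivative integrates to zero along an averaged coordinate. One harmless bookkeeping slip: your first display actually computes $X_{j,s}(R_g\psi)(x)$, and carrying it through gives $\tilde\Phi(y)=\Ad(g_j^{-1})\Phi(yg^{-1})$ rather than $\Ad(g_j)\Phi(yg^{-1})$, which changes nothing since only the orthogonality of $\Ad$ is used.
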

\begin{proof}
Left translations commute with the left   invariant frame. Right translation in coordinate $j$ transforms that frame by $\Ad(g_j^{-1})$, an orthogonal map. The corresponding transformations of fields preserve their pointwise norms and the Haar integrals. Applying the inverse translation proves equality. Averaging the transformed fields gives the variation inequalities, and integration of a derivative over the averaged coordinate gives zero. Finally each fixed integral is continuous on $L^1$; taking its supremum proves lower semicontinuity. The functional $\Va$ is the supremum of its lower semicontinuous finite partial sums.
\end{proof}

\begin{lemma}[Translations in One Coordinate]\label{lem:translate}
For $h\in K$ with $K$ compact connected  Lie group, let $R_{j,h}$ translate the $j$th coordinate. Then
the geodesic distance $d_K(1,h)$  implies  the bound \[
\|R_{j,h}f-f\|_1\leq d_K(1,h) \  \mathcal V_j(f).
\]
In particular, if $A_j$ averages over coordinate $j$, then
\begin{equation}\label{eq:oneaverage}
\|f-A_jf\|_1\leq c_K\mathcal V_j(f),\qquad \mbox{where} \qquad
c_K=\int_Kd_K(1,h)\,dm_K(h).
\end{equation}
\end{lemma}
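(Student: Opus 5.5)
The plan is to prove the translation bound first for smooth cylinder functions and then pass to general $f\in L^1(G,m)$ by approximation, using the lower semicontinuity of Proposition~\ref{prop:equivariant}.

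\textbf{Smooth case.} Let $f$ be a smooth cylinder function, fix $h\in K$, and choose a unit-speed minimizing geodesic $\gamma:[0,L]\to K$ with $\gamma(0)=1$, $\gamma(L)=h$ and $L=d_K(1,h)$. Since $K$ is compact with a bi-invariant metric, such a geodesic exists. Then $\gamma(t)=\exp(tX)$ with $|X|=1$, because geodesics of a bi-invariant metric through the identity are one-parameter subgroups. Writing $X=\sum_s c_sX_s$ with $\sum_s c_s^2=1$, we have
\[
R_{j,h}f(x)-f(x)=\int_0^L \frac{d}{dt}f(\ldots,x_j\exp(tX),\ldots)\,dt=\int_0^L \Bigl(\sum_s c_s X_{j,s}f\Bigr)(R_{j,\gamma(t)}x)\,dt.
\]
By Cauchy--Schwarz the integrand is bounded pointwise by $\bigl(\sum_s|X_{j,s}f|^2\bigr)^{1/2}$ evaluated at the translated point. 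Integrating over $G$, applying Fubini, and using that $R_{j,\gamma(t)}$ preserves $m$, we get
\[
\|R_{j,h}f-f\|_1\le L\int_G\Bigl(\sum_s|X_{j,s}f|^2\Bigr)^{1/2}dm=d_K(1,h)\,\mathcal V_j(f),
\]
where the last equality is the formula for $\mathcal V_j$ on smooth cylinder functions.

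\textbf{General $f$.} This is the main obstacle: we need smooth cylinder functions $f_k\to f$ in $L^1$ with $\mathcal V_j(f_k)\to\mathcal V_j(f)$, and not merely $\limsup\mathcal V_j(f_k)\le\mathcal V_j(f)$ together with lower semicontinuity. We may assume $\mathcal V_j(f)<\infty$. I would build $f_k$ in two steps.
\begin{itemize}
\item First apply the averaging $E_n$. By Proposition~\ref{prop:equivariant}, $\mathcal V_j(E_nf)\le\mathcal V_j(f)$, and by Lemma~\ref{thm:tower}, $E_nf\to f$ in $L^1$.
\item For $n\ge j$, the function $E_nf$ lives on $K^n$. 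Convolve it on $K^n$ with a smooth approximate identity $\rho_\varepsilon$ acting by left translation. Left translations commute with the left-invariant frame, so the variation is preserved: by convexity and invariance of $\mathcal V_j$ under $L_g$, we get $\mathcal V_j(\rho_\varepsilon * E_nf)\le\mathcal V_j(E_nf)$.
\end{itemize}
This yields smooth cylinder $f_k\to f$ in $L^1$ with $\mathcal V_j(f_k)\le\mathcal V_j(f)$. Since $R_{j,h}$ is an $L^1$ isometry, $R_{j,h}f_k\to R_{j,h}f$, and the smooth-case bound passes to the limit. The key point to check is the convexity claim: $\mathcal V_j$ is a supremum of absolute values of linear functionals, hence convex and positively homogeneous. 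Therefore $\mathcal V_j\bigl(\int L_gF\,\rho(g)\,dg\bigr)\le\int\mathcal V_j(L_gF)\rho(g)\,dg$. This follows by moving the test field inside the integral via Fubini.

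\textbf{Averaged bound.} For \eqref{eq:oneaverage}, write $A_jf=\int_K R_{j,h}f\,dm_K(h)$. Then
\[
\|f-A_jf\|_1\le\int_K\|f-R_{j,h}f\|_1\,dm_K(h)\le\Bigl(\int_K d_K(1,h)\,dm_K(h)\Bigr)\mathcal V_j(f)=c_K\,\mathcal V_j(f),
\]
by Minkowski's integral inequality and the first part. The function $h\mapsto d_K(1,h)$ is continuous, so $c_K$ is finite.
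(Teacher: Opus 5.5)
Your proof is correct, but it runs in the opposite direction from the paper's. The paper never regularizes $f$. It computes $\|R_{j,h}f-f\|_1$ by duality, as the supremum of $\int_G(R_{j,h}f-f)\varphi\,dm$ over smooth cylinder $\varphi$ with $\|\varphi\|_\infty\le1$; this supremum is recovered via $w/(w^2+\varepsilon^2)^{1/2}$ for a smooth cylinder approximation $w$. It then moves the translation onto $\varphi$ by Haar invariance and writes $R_{j,h^{-1}}\varphi-\varphi$ as the $t$-integral of the derivative of $\varphi(x_j\exp(-tX))$. That derivative is $\diver_j$ of admissible fields with direction $X/|X|$ and a translate of $\varphi$ as coefficient, so each term is bounded by $|X|\,\mathcal V_j(f)$ directly from Definition~\ref{def:variation}, and arbitrary $f\in L^1$ is handled at once.

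You instead prove the bound for smooth cylinder $f$ by the fundamental theorem of calculus and Cauchy--Schwarz. This uses the nontrivial half $\int_G|\nabla_jf|\,dm\le\mathcal V_j(f)$ of the smooth-case formula. You then have to regularize $f$ while controlling the variation, and your construction $f_k=\rho_{\varepsilon_k}*E_{n_k}f$ does work. Indeed, $E_nf$ factors through $K^n$, and left convolution on $K^n$ yields a smooth cylinder function. Fubini together with convexity and left invariance (Proposition~\ref{prop:equivariant}, whose proof does not use the present lemma, so there is no circularity) gives $\mathcal V_j(\rho_\varepsilon*E_nf)\le\mathcal V_j(E_nf)\le\mathcal V_j(f)$. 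Lemma~\ref{thm:tower} supplies the $L^1$ convergence.

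The duality route is shorter and needs no approximation of $f$. Yours is longer but yields a by-product the paper does not state. Since $E_n$ and left convolutions increase no $\mathcal V_i$, you get smooth cylinder $f_k\to f$ in $L^1$ with $\mathcal V_i(f_k)\le\mathcal V_i(f)$ for every $i$ simultaneously, hence $\Va(f_k)\to\Va(f)$ by lower semicontinuity.

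One small correction to your wording. The limit passage uses only the one-sided bound $\mathcal V_j(f_k)\le\mathcal V_j(f)$; neither lower semicontinuity nor convergence of the variations is needed. So your remark that $\limsup_k\mathcal V_j(f_k)\le\mathcal V_j(f)$ would not suffice is confused: that is exactly what suffices.
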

\begin{proof}
The bi-invariant metric is complete; a minimizing geodesic from $1$ to $h\in K$ has the form $t\mapsto\exp(tX)$, $0\leq t\leq1$, with $|X|=d_K(1,h)$. For a smooth scalar cylinder $\varphi$ with $\|\varphi\|_\infty\leq1$, change variables using Haar measure and integrate the derivative of $\varphi(x_j\exp(-tX))$. The resulting check of Definition~\ref{def:variation} has vector coefficient $X\varphi/|X|$ and norm at most one. The integral is bounded by $|X|\mathcal V_j(f)$. Smooth scalar cylinder functions of norm at most one in $L^1$-norm. For clarity, approximate a real $u\in L^1$ by a smooth cylinder $w$, and applies with $w/(w^2+\varepsilon^2)^{1/2}$; the integral tends to $\|u\|_1$ as the two approximation errors vanish. Taking this supremum proves the translation inequality. Averaging it in $h$ and using \[A_jf=\int_KR_{j,h}f\,dm_K(h)\] proves \eqref{eq:oneaverage}.
\end{proof}

\begin{lemma}[Uniform Control of Later Coordinates]\label{lem:tail}
For every $f\in L^1(G,m)$ with $G=K^\N$ and $K$ compact connected Lie group, we have that
\[
\|f-E_nf\|_1\leq c_K\sum_{j>n}\mathcal V_j(f).
\]
In particular, if $\alpha_n=\inf_{j>n}a_j>0$, then the right side is at most $c_K\Va(f)/\alpha_n$.
\end{lemma}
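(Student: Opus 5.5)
We prove the estimate by telescoping the Haar average $E_n$ into successive one-coordinate averages and applying Lemma~\ref{lem:translate} to each step. For $m>n$ write $E_{n,m}=A_{n+1}A_{n+2}\cdots A_m$ for the Haar average over the coordinates $n+1,\ldots,m$. The operators $A_j$ act on distinct coordinates of the product, so Fubini shows that they commute. Each $A_j$ is the conditional expectation of Proposition~\ref{prop:averaging} for the normal subgroup given by the $j$th factor. Hence each $A_j$, and also $E_{n,m}$, is an $L^1$ contraction.

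First we establish the finite-stage inequality. Put $F_k=A_{n+1}\cdots A_{k}f$ for $k\geq n$, with $F_n=f$. Then
\[
\|f-E_{n,m}f\|_1\leq\sum_{k=n+1}^{m}\|F_{k-1}-A_kF_{k-1}\|_1\leq c_K\sum_{k=n+1}^m\mathcal V_k(F_{k-1}),
\]
where the second step is \eqref{eq:oneaverage}. By Proposition~\ref{prop:equivariant} we have $\mathcal V_i(A_jf)\leq\mathcal V_i(f)$, and iterating gives $\mathcal V_k(F_{k-1})\leq\mathcal V_k(f)$. Therefore $\|f-E_{n,m}f\|_1\leq c_K\sum_{j>n}\mathcal V_j(f)$, uniformly in $m$.

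Second, we pass to the limit $m\to\infty$. The claim is that $E_{n,m}f\to E_nf$ in $L^1$. For a continuous cylinder function depending only on the first $M$ coordinates, $E_{n,m}f=E_nf$ as soon as $m\geq M$. Cylinder functions are dense in $L^1$ by Stone--Weierstrass and the density of $C(G)$ in $L^1$. The operators $E_{n,m}$ and $E_n$ are all contractions, so this exact agreement extends to the required convergence for arbitrary $f\in L^1$. Alternatively, one can apply Lemma~\ref{thm:tower} on the group $G$ to the tower $N_m=1^m\times K^{\{m+1,\ldots\}}$, composed with $E_n$.

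Lower semicontinuity of the norm then yields $\|f-E_nf\|_1\leq c_K\sum_{j>n}\mathcal V_j(f)$. If the right side is infinite there is nothing to prove. The final claim follows from $\mathcal V_j(f)\leq a_j\mathcal V_j(f)/\alpha_n$ for $j>n$, summed and bounded by $\Va(f)/\alpha_n$.

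The main obstacle is justifying $\mathcal V_k(F_{k-1})\leq\mathcal V_k(f)$ through repeated averaging, and making the limit $E_{n,m}\to E_n$ rigorous. The former relies on Proposition~\ref{prop:equivariant}; the latter on the density argument above.
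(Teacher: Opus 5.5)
Your proposal is correct and follows the paper's proof: telescope the one-coordinate averages $A_{n+1},\ldots,A_N$ using \eqref{eq:oneaverage}, then pass to $N\to\infty$ via exact agreement on cylinder functions plus density and contraction. The one difference is in how each telescoping term is bounded. The paper writes the term as $A_{n+1}\cdots A_{k-1}(f-A_kf)$ and uses $\|A_{n+1}\cdots A_{k-1}(f-A_kf)\|_1\leq\|f-A_kf\|_1$, so the step you call the main obstacle, $\mathcal V_k(F_{k-1})\leq\mathcal V_k(f)$, is valid but not needed.
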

\begin{proof}
The commuting contractions $A_j$ and \eqref{eq:oneaverage} give
\[
\|f-A_{n+1}\cdots A_Nf\|_1\leq c_K\sum_{j=n+1}^{N}\mathcal V_j(f).
\]
As $N\to\infty$, these averages converge to $E_nf$ in $L^1$-norm. This is eventually an equality for a function depending on finitely many coordinates, and holds generally by density and contraction. Pass to the limit and use $a_j\geq\alpha_n$ for $j>n$.
\end{proof}

\section{Proof of the first main theorem}\label{proof1}

In the present section we prove Theorem \ref{thm:intro} and the following lemma is crucial.

\begin{lemma}\label{lem:finite}
Consider $K$ compact connected Lie group. For a fixed positive integer $n$, a family of functions on $K^n$ with bounded $L^1$ norm and bounded coordinate variations $\mathcal V_1,\ldots,\mathcal V_n$ is relatively compact in $L^1(K^n,m_{K^n})$.
\end{lemma}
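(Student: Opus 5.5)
We prove Lemma~\ref{lem:finite} with the Fréchet--Kolmogorov (Riesz) criterion on the compact group $K^n$. There, a bounded family $\mathcal F\subset L^1(K^n,m_{K^n})$ is relatively compact as soon as translations are uniformly $L^1$-continuous. That is, we need
\[
\sup_{f\in\mathcal F}\|R_hf-f\|_1\to0 \quad\text{as } h\to1 \text{ in } K^n,
\]
where $R_hf(x)=f(xh)$. On a compact group this criterion is classical. One can also derive it directly from convolution with an approximate identity $\rho_\varepsilon$ supported near $1$. The estimate $\|f*\rho_\varepsilon-f\|_1\leq\sup_{h\in\operatorname{supp}\rho_\varepsilon}\|R_hf-f\|_1$ makes $f$ uniformly close to $f*\rho_\varepsilon$. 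Since $\|f\|_1\le B$, the family $\{f*\rho_\varepsilon\}$ is equicontinuous and uniformly bounded, hence relatively compact in $C(K^n)$ by Arzel\`a--Ascoli, and so in $L^1$. Total boundedness of $\mathcal F$ then follows, exactly as in the converse part of Lemma~\ref{thm:tower}.

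The key estimate comes from Lemma~\ref{lem:translate} applied one coordinate at a time. We regard functions on $K^n$ as cylinder functions on $G=K^{\N}$ by pullback; by Proposition~\ref{prop:measure} and Proposition~\ref{prop:averaging}, $L^1$ norms are unchanged. The variations $\mathcal V_j$, $j\le n$, are also unchanged, because cylinder test fields can be averaged over the later coordinates without increasing their sup norm. Write $h=(h_1,\dots,h_n)$ and factor $R_h=R_{1,h_1}\cdots R_{n,h_n}$. Telescoping, and using that each $R_{j,h_j}$ is an $L^1$ isometry that does not change $\mathcal V_i$ (Proposition~\ref{prop:equivariant}), gives
\[
\|R_hf-f\|_1\leq\sum_{j=1}^n\|R_{j,h_j}f-f\|_1\leq\sum_{j=1}^n d_K(1,h_j)\,\mathcal V_j(f)\leq \Bigl(\max_{j\le n}\sup_{f\in\mathcal F}\mathcal V_j(f)\Bigr)\sum_{j=1}^n d_K(1,h_j).
\]
The right side tends to $0$ uniformly on $\mathcal F$ as $h\to1$, since $d_K$ is compatible with the topology of $K$.

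The main point needing care is the compactness criterion itself, namely the convolution step with right translations on a possibly nonabelian group. We take $\rho_\varepsilon\ge0$ continuous, with integral one and support in a small identity neighborhood $U$, and set $f_\varepsilon(x)=\int f(xh)\rho_\varepsilon(h)\,dm(h)$. Minkowski's integral inequality gives $\|f_\varepsilon-f\|_1\le\sup_{h\in U}\|R_hf-f\|_1$. Equicontinuity of $f_\varepsilon$ follows by rewriting $f_\varepsilon(x)=\int f(y)\rho_\varepsilon(x^{-1}y)\,dm(y)$, using left invariance. Then
\[
|f_\varepsilon(x)-f_\varepsilon(x')|\le\|f\|_1\,\sup_y|\rho_\varepsilon(x^{-1}y)-\rho_\varepsilon(x'^{-1}y)|,
\]
which is uniformly small by the uniform continuity of $\rho_\varepsilon$ on the compact group. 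The same representation gives the uniform bound $|f_\varepsilon|\le B\|\rho_\varepsilon\|_\infty$. With these two facts, Arzel\`a--Ascoli and the finite-net argument complete the proof.
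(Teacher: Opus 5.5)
Your proposal is correct and follows essentially the same route as the paper. You telescope the one-coordinate estimate of Lemma~\ref{lem:translate} to get uniform $L^1$-continuity of translations, then smooth with an approximate identity, apply Arzel\`a--Ascoli at a fixed scale, and conclude with a finite-net argument. The only differences are minor. You get equicontinuity from the uniform continuity of a merely continuous kernel rather than from derivative bounds for a smooth one. You also spell out the identification of $\mathcal V_1,\ldots,\mathcal V_n$ on $K^n$ with those of the cylinder lift, which the paper leaves implicit here and only states in Proposition~\ref{prop:finsler}.
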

\begin{proof}
Telescoping translations and Lemma~\ref{lem:translate} give
\[
\|R_hu-u\|_1\leq\sum_{j=1}^n d_K(e,h_j)\mathcal V_j(u).
\]
Convolving with a smooth approximate identity on the compact Lie group $K^n$ approximates the family uniformly in $L^1$. At each fixed smoothing scale, its $L^1$-norm bound  constrain the smoothed functions and their derivatives uniformly. Arzel\`a--Ascoli Theorem (see \cite{afp} or \cite[Theorem 43.15, Problems 43 E]{willard}) makes the smoothed family precompact, and approximation gives the conclusion. This is the finite   dimensional translation argument of the  compactness principle; see \cite{afp,krukowski}.
\end{proof}

\begin{proof}[Proof of Theorem~\ref{thm:intro}: compactness]
Suppose $a_j\to\infty$, so $\alpha_n\to\infty$. Lemma~\ref{lem:tail} gives uniform approximation of the indicated sublevel set $\mathcal F_{A,B}$ by $E_n\mathcal F_{A,B}$. At a fixed stage,
\[
\|E_nf\|_1\leq B,\qquad
\mathcal V_j(E_nf)\leq A/a_j\quad(1\leq j\leq n).
\]
Lemmas \ref{lem:finite} and \ref{thm:tower} imply precompactness, but this is sufficient to conclude compactness. In fact, the lower semicontinuity makes $\mathcal F_{A,B}$ closed, hence compact.

Conversely, if $a_j$ does not diverge, choose distinct $j_k$ with $a_{j_k}\leq C<\infty$. A nontrivial small smooth domain $D\subset K$ has $0<b=m_K(D)<1$ and finite normalized Riemannian perimeter $p>0$. For
\[
F_k=\{x:x_{j_k}\in D\}
\]
we have $m(F_k)=b$, $\Pa(F_k)=a_{j_k}p\leq Cp$, while we also have 
\[
\|\one_{F_k}-\one_{F_\ell}\|_1=2b(1-b)>0\quad(k\ne\ell).
\]
Thus the sublevel set with $A=Cp$, $B=b$ is not relatively compact. This establishes the equivalence. Note that we may chopse two arbitrary constants $A,B$ and this allows us to include degenerate cases without claiming that every individual sublevel set must fail compactness when the weights do not diverge.
\end{proof}

Now we go ahead to prove the final part of Theorem \ref{thm:intro}, where compactness is used just to describe the isoperimetric profile and quantitative bounds on the perimeter.

\begin{lemma}[Behaviour of Competitors]\label{lem:volume}
For every $v\in]0,1[$ there exists a finite perimeter measurable set $D_v\subset K$ with $m_K(D_v)=v$, where $K$ is compact connected Lie group of normalized Haar measure $m_K$.
\end{lemma}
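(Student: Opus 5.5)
We construct $D_v$ as a geodesic ball $B(1,\rho)$ in $(K,g_K)$ and choose the radius by continuity. Throughout, the one-block variation $\mathcal V_1$ on $K$ agrees with the normalized Riemannian perimeter; the paper notes this for $K$ with normalized Riemannian Haar measure, and we use it freely.

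Put $\phi(\rho)=m_K(B(1,\rho))$ for $\rho\in[0,\operatorname{diam}K]$. The function $\phi$ is nondecreasing, with $\phi(0)=0$ and $\phi(\operatorname{diam}K)=1$. It is continuous: spheres $\{x: d_K(1,x)=\rho\}$ are Haar-null, because $d_K(1,\cdot)$ is Lipschitz with $|\nabla d|=1$ almost everywhere, so the coarea formula applies. Equivalently, the distance function has no level set of positive Riemannian volume. The intermediate value theorem then gives some $\rho_v$ with $\phi(\rho_v)=v$. We set $D_v=B(1,\rho_v)$.

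The main obstacle is to show that $\mathcal V_1(\one_{D_v})<\infty$ for this particular radius, since spheres beyond the injectivity radius can be irregular (cut locus). We use the coarea inequality for the Lipschitz function $d=d_K(1,\cdot)$:
\[
\int_0^{\operatorname{diam}K}\mathcal V_1(\one_{B(1,\rho)})\,d\rho\le\int_K|\nabla d|\,dm_K\le 1.
\]
This is obtained by the Fleming--Rishel formula (see \cite{afp}) after identifying $\mathcal V_1$ with normalized perimeter. It yields finite perimeter for almost every $\rho$, but not necessarily for the chosen $\rho_v$.

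To handle this, we replace the fixed radius by a one-parameter family and use right-invariance of the perimeter (Proposition~\ref{prop:equivariant}). Two routes are available.

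\textbf{Primary route: superlevel sets of a smooth Morse function.} Take $u\in C^\infty(K)$ Morse, and let $D_t=\{u>t\}$. For regular values $t$ the boundary is a smooth compact hypersurface, so the perimeter is finite. The map $t\mapsto m_K(D_t)$ is continuous: level sets of a Morse function are null, since the critical set is finite and regular level sets are hypersurfaces. It is also surjective onto $]0,1[$. If the $t$ with $m_K(D_t)=v$ is a critical value, we use the perimeter bound near critical levels, which is finite because a Morse function has only nondegenerate isolated critical points with conical level sets. Alternatively, we perturb $u$ by a small constant so that the target volume is hit at a regular value. This works because the set of $t$ with $m_K(D_t)=v$ is an interval; if it is a single point $t_0$ that is critical, we replace $u$ by $u+\varepsilon\psi$ with $\psi$ a bump supported away from the finitely many critical points.

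\textbf{Fallback route: small balls plus a complement trick.} For $v\le\phi(\rho_0)$, with $\rho_0$ below the injectivity radius, balls have smooth boundary. For larger $v$ we use complements, since $\Pa(E)=\Pa(G\setminus E)$, together with disjoint unions of finitely many small smooth balls. Each such union has finite perimeter, and its measure is adjusted continuously by shrinking one ball. Choosing $N$ disjoint balls whose total mass exceeds $v$ and then shrinking one ball continuously reaches mass exactly $v$, with perimeter at most the sum of the individual perimeters.

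We would present the fallback route as the actual proof, since it avoids Morse theory entirely and uses only smooth small balls and continuity of $\phi$ below the injectivity radius.
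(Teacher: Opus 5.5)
\textbf{There is a gap in the route you designate as the actual proof.} It lies in the step ``choosing $N$ disjoint balls whose total mass exceeds $v$''. You give no construction, and none is automatic. Even after the complement trick you still need pairwise disjoint balls of radius below the injectivity radius with total mass at least $\min\{v,1-v\}$, which can be $1/2$. Balls of one common radius cannot achieve this in general. For instance, on the flat torus $\T^8$ any family of disjoint balls of a common radius below $1/2$ lifts to a periodic sphere packing of $\R^8$, so its total mass is at most the $E_8$ density $\pi^4/384\approx 0.25$. The claim is true, but it needs a Vitali-type exhaustion by disjoint balls with radii tending to zero, which you would have to supply. Also, shrinking a single ball reaches mass $v$ only when the excess over $v$ is smaller than that ball's mass; in general the balls must be shrunk one after another.

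\textbf{Repair.} Drop disjointness, which you never need. You already use $\mathcal V_1(\one_{E\cup F})\le\mathcal V_1(\one_E)+\mathcal V_1(\one_F)$, and mass stays continuous under overlaps, because $|m_K(A\cup B(x,t))-m_K(A\cup B(x,s))|\le|\phi(t)-\phi(s)|$. Cover $K$ by finitely many balls $B(x_1,\rho_0),\ldots,B(x_s,\rho_0)$. Grow $B(x_k,t)$ from $t=0$ to $t=\rho_0$ on top of the already completed balls, one index at a time. The mass along this chain runs continuously from $0$ to $1$, so the intermediate value theorem gives mass $v$. This is exactly the paper's argument with balls in place of its coordinate boxes. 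The paper disjointifies the boxes as $U_i\setminus\bigcup_{k<i}U_k$, fills them in order, and cuts the last one by a sublevel set of a chart coordinate.

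\textbf{Other remarks.} Write $n=\dim K$; Federer's criterion says a set $E$ with $\mathcal H^{n-1}(\partial E)<\infty$ has finite perimeter (applied in charts). Your Morse route is a genuinely different proof and is sound in outline: at a critical level the Morse lemma gives conical level sets, so $\mathcal H^{n-1}(\{u=t\})<\infty$, and Federer's criterion applies. However, adding a constant to $u$ changes no superlevel set. Likewise, the appeal to right invariance adds nothing, since translates of a ball have the same mass and perimeter. Finally, the cut-locus obstacle you identified is not real. Every point at distance $\rho$ from $1$ is $\exp X$ with $|X|=\rho$, so $\partial B(1,\rho)$ lies in a smooth image of a round $(n-1)$-sphere and has finite $\mathcal H^{n-1}$-measure. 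Hence every geodesic ball has finite perimeter, and your first construction $D_v=B(1,\rho_v)$ already works.
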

\begin{proof}
Cover $K$ by finitely many open coordinate boxes $U_1,\ldots,U_s$ whose closures lie in coordinate charts. Each box has piecewise smooth boundary and finite perimeter. Disjoint them two by two by $D_i=U_i\setminus\bigcup_{k<i}U_k$. Finite unions, intersections and differences of finite   perimeter sets have finite perimeter (see \cite{afp} since this is a general property of BV functions and BV product inequality). Therefore $D_i$ makes a partition of $K$ up to null boundaries.

Take the first index for which the cumulative mass reaches $v$. Fill the preceding pieces completely and cut the remaining piece $D_i$ by a sublevel of the first coordinate in its chart. Its mass as a function of the cutting parameter is continuous, by dominated convergence, since coordinate hyperplanes have zero volume. It ranges from zero to $m_K(D_i)$. Every such cut of the coordinate box has piecewise smooth boundary; intersecting with $D_i$ keeps finite perimeter. The Intermediate Value Theorem (see this classical fact in \cite{rudin}) supplies the required mass. Finally, one can check that the construction is complete and the result follows.
\end{proof}

Concerning the final part of the proof of Theorem \ref{thm:intro}, we must deal with isoperimetric inequalities and existence of minimizers. Therefore we prove the final part of  Theorem \ref{thm:intro} in terms of Theorem \ref{thm:isoperimetry} and Corollary \ref{cor:poincare} below. This helps to identify independently a classical idea of geometric measure theory when we deal with theorems of regularity and compactness, namely the problem of existence of minimizers when one has isoperimetric inequalities.

\begin{theorem}[Existence of Minimizers]\label{thm:isoperimetry}
Assume $a_j\to\infty$ and define the isoperimetric profile  depending on the costant $a$ and on the compact connected Lie group $K$ of normalized Haar measure $m_K$ and product measure $m=m^{\otimes \N}_K$ by
\[
I_{K,a}(v)=\inf\{\Pa(E):m(E)=v\},\qquad 0<v<1.
\]
The infimum is finite and attained. Every minimizing sequence has an $L^1$-convergent subsequence of characteristic functions, whose limit is a minimizer of the same mass; its perimeters converge to the limiting perimeter. All left and right translates of a minimizer are minimizers.
\end{theorem}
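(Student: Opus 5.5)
The argument is the direct method of the calculus of variations, with Theorem~\ref{thm:intro} supplying the compactness. The steps are finiteness of the profile, compactness of a minimizing sequence, closedness of the constraint, and then lower semicontinuity combined with invariance.

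\emph{Finiteness.} Fix $v\in]0,1[$ and take $D_v\subset K$ from Lemma~\ref{lem:volume}. Put $E=\{x: x_1\in D_v\}$, so that $m(E)=m_K(D_v)=v$. The function $\one_E$ is a cylinder function of the first coordinate, hence $\mathcal V_j(\one_E)=0$ for $j\geq2$. Also $\mathcal V_1(\one_E)$ equals the normalized perimeter $p$ of $D_v$ in $K$, which is finite; this holds because cylinder test fields can be averaged in the other coordinates without increasing the integral. Therefore $I_{K,a}(v)\leq a_1p<\infty$.

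\emph{Compactness of minimizing sequences.} Let $(E_k)$ satisfy $m(E_k)=v$ and $\Pa(E_k)\to I_{K,a}(v)$. Then $\|\one_{E_k}\|_1=v\leq 1$ and $\Va(\one_{E_k})\leq A$ for a uniform constant $A$. By Theorem~\ref{thm:intro}(ii), which we may use since $a_j\to\infty$, the sequence has a subsequence converging in $L^1$ to some $u$. After passing to a further a.e.\ convergent subsequence, $u$ takes values in $\{0,1\}$ a.e., so $u=\one_E$ for a measurable $E$. Moreover $m(E)=\int u\,dm=\lim_k m(E_k)=v$, because integration against $1$ is $L^1$-continuous.

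\emph{Minimality, convergence of perimeters, and translates.} By the lower semicontinuity of $\Va$ in Proposition~\ref{prop:equivariant},
\[
\Pa(E)\leq\liminf_k\Pa(E_k)=I_{K,a}(v).
\]
Since $E$ is admissible, we also have $\Pa(E)\geq I_{K,a}(v)$, so $E$ is a minimizer. This also gives $\lim_k\Pa(E_k)=I_{K,a}(v)=\Pa(E)$ along the subsequence, which is the convergence of perimeters to the limiting perimeter. For translates, note that $L_g\one_E=\one_{gE}$ and $R_g\one_E=\one_{Eg^{-1}}$, and the Haar measure is bi-invariant. Hence translates keep mass $v$, and by Proposition~\ref{prop:equivariant} they keep the same perimeter, so they are minimizers as well.

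\emph{Main obstacle.} The only genuinely delicate point is the finiteness step, specifically the identification $\mathcal V_1(\one_E)=p$, or at least $\mathcal V_1(\one_E)<\infty$. This is where cylinder vector fields must be compared with vector fields on $K$. I would handle it as follows. First apply Fubini to reduce the test integral to $K$ for each fixed value of the remaining coordinates. The integrand becomes a divergence of a field bounded by $1$ on $K$, and that integral is bounded by the classical perimeter $p$. Finally, normalize by the volume $\vol_{g_K}(K)$ to account for the normalized measure $m_K$. Everything else follows formally from earlier results.
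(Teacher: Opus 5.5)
Your proposal is correct and follows the paper's proof essentially step for step. You use the first-coordinate cylinder over $D_v$ from Lemma~\ref{lem:volume} for finiteness, the compactness of Theorem~\ref{thm:intro}(ii) plus an a.e.\ convergent subsequence for an indicator limit of mass $v$, lower semicontinuity for minimality, and Proposition~\ref{prop:equivariant} with bi-invariance of Haar measure for translates. Your Fubini argument bounding $\mathcal V_1(\one_E)$ by the normalized perimeter of $D_v$, together with $\mathcal V_j(\one_E)=0$ for $j\geq2$, only makes explicit what the paper asserts in one line.
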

\begin{proof}
The cylinder $\{x:x_1\in D_v\}$ supplied by Lemma~\ref{lem:volume} has finite perimeter, so the infimum is finite. A minimizing sequence has bounded variation and $L^1$-norm equal to $v$. Compactness gives an $L^1$-limit, and a further subsequence converges almost everywhere. The limit is a characteristic function; convergence of integrals preserves its mass. Lower semicontinuity proves minimality. The perimeter values converge by the definition of a minimizing sequence. Translation invariance of Haar measure and Proposition~\ref{prop:equivariant} prove the final assertion.
\end{proof}

The conclusion is existence and compactness, with no assertion of smooth boundaries, uniqueness or a classification of minimizers. Since $G=K^\N$ is a compact group in Theorem \ref{thm:intro}, we have compactness after arbitrary choices of translations of a uniformly bounded perimeter sequence.

\begin{corollary}[Haar--Poincar\'e Estimate and Positive Isoperimetric Profile]\label{cor:poincare}
Consider $K$ compact connected Lie group and $G=K^\N$ as per \eqref{infinitedimensionaliegroup} and \eqref{newperimeter}. If $a_*:=\inf_ja_j>0$, then
\[
\left\|f-\int_G f\,dm\right\|_1\leq\frac{c_K}{a_*}\Va(f).
\]
Consequently for $v \in ]0,1[$ we have
\[
I_{K,a}(v)\geq\frac{2a_*}{c_K} \ v \ (1-v) \ > \qquad and 
\qquad I_{K,a}(v)=I_{K,a}(1-v).
\]
In particular these conclusions hold when $a_j\to\infty$.
\end{corollary}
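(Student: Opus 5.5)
The Poincaré estimate comes from Lemma~\ref{lem:tail} applied with $n=0$. Here $E_0$ denotes Haar averaging over the whole group $G=1^0\times K^{\{1,2,\ldots\}}$, so $E_0f=\int_G f\,dm$ is the constant function. Lemma~\ref{lem:tail} then gives
\[
\left\|f-\int_G f\,dm\right\|_1\leq c_K\sum_{j\geq1}\mathcal V_j(f)\leq \frac{c_K}{a_*}\sum_{j\geq1}a_j\mathcal V_j(f)=\frac{c_K}{a_*}\Va(f),
\]
using $a_j\geq a_*$ for every $j$. If the proof of Lemma~\ref{lem:tail} is read as requiring $n\geq1$, I will repeat its argument directly. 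I bound $\|f-A_1\cdots A_Nf\|_1$ by telescoping with \eqref{eq:oneaverage}, using that the $A_j$ are commuting contractions. I then pass to $N\to\infty$, where $A_1\cdots A_Nf\to\int_G f\,dm$ in $L^1$: this holds exactly for cylinder functions, and extends to all of $L^1$ by density and contraction.

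For the lower bound on the profile, take $f=\one_E$ with $m(E)=v$. Then $\int f\,dm=v$, and
\[
\|\one_E-v\|_1=(1-v)m(E)+v\,m(G\setminus E)=2v(1-v).
\]
The estimate gives $2v(1-v)\leq (c_K/a_*)\Pa(E)$. Taking the infimum over all such $E$ yields $I_{K,a}(v)\geq 2a_*v(1-v)/c_K$. This is strictly positive because $0<v<1$ and $c_K>0$; the latter holds since $K$ is nontrivial and connected, so $d_K(1,h)>0$ on a set of positive Haar measure.

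For the symmetry $I_{K,a}(v)=I_{K,a}(1-v)$, I show that $\mathcal V_j(\one_{G\setminus E})=\mathcal V_j(\one_E)$ for each $j$. Since $\one_{G\setminus E}=1-\one_E$, it suffices that $\int_G \diver_j\Phi\,dm=0$ for every smooth cylinder field $\Phi$. This is the integration by parts noted before Definition~\ref{def:variation}: the flows of $X_{j,s}$ preserve Haar measure. Hence $\Pa(G\setminus E)=\Pa(E)$. Complementation is a bijection between sets of mass $v$ and sets of mass $1-v$, so the two infima agree.

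Finally, when $a_j\to\infty$ the weights are positive and eventually larger than $1$, so $a_*=\inf_j a_j>0$ and the previous conclusions apply. The only delicate point is the $n=0$ case of the tail lemma, that is, the $L^1$ convergence of the full averages $A_1\cdots A_N$ to the mean. I expect this to be routine by the density argument above.
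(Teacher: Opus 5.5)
Your proposal is correct and follows the paper's proof essentially step for step: telescope \eqref{eq:oneaverage} over the first $N$ coordinates, pass to the Haar mean by cylinder density, evaluate $\|\one_E-v\|_1=2v(1-v)$ for indicators, and use $\int_G\diver_j\Phi\,dm=0$ to get equal perimeter for a set and its complement. Your extra checks, that $c_K>0$ and that $a_*>0$ when $a_j\to\infty$, fill in small points the paper leaves implicit.
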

\begin{proof}
Average the first $N$ coordinates and telescope \eqref{eq:oneaverage}. By cylinder density the averages tend in $L^1$ to the Haar mean, giving \[\left\| f - \int_G f dm\right\|_1\leq c_K\sum^\infty_{j=1}\mathcal V_j(f).\] For an indicator of mass $v$, the left side is $2v(1-v)$. Finally $\int_G\diver_j\Phi\,dm=0$, so a set and its complement have the same variation and perimeter.
\end{proof}

\section{Quotient homomorphisms and perimeter}\label{isoperimetricomparison}

The next result depends on a normal subgroup and its quotient Lie algebra, not on a general continuous map of compact spaces.

\begin{theorem}[Exact Preservation under Quotient Pullback]\label{thm:quotient}
Let $q:K\to H$ be a surjective homomorphism of compact connected Lie groups. Equip $K$ with a bi-invariant metric and $H$ with its quotient metric, so $dq$ restricts to an isometry from $(\ker dq)^\perp$ onto $\mathfrak h$. Give both groups normalized Haar measure. For $Q=q^{\N}$ and real $f\in L^1(H^{\N},m^{\otimes \N}_H)$, we have
\[
\|f\circ Q\|_1=\|f\|_1,\qquad
\mathcal V_j^K(f\circ Q)=\mathcal V_j^H(f),\qquad
\mathcal V_a^K(f\circ Q)=\mathcal V_a^H(f).
\]
Then Haar mass and weighted perimeter are preserved under inverse images of measurable sets.
\end{theorem}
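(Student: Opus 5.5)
The first identity comes from Haar measure. Since $q$ is a surjective continuous homomorphism, $q_*m_K$ is an $H$-invariant probability, hence $m_H$, exactly as in the first line of Proposition~\ref{prop:averaging}. The product map $Q=q^{\N}$ is then a surjective homomorphism $K^{\N}\to H^{\N}$. On cylinder sets, $Q_*m_K^{\otimes\N}$ agrees with $m_H^{\otimes\N}$, so the two measures coincide by the uniqueness in Proposition~\ref{prop:measure}. This gives $\|f\circ Q\|_1=\|f\|_1$ for all $f\in L^1$, and $m(Q^{-1}E)=m(E)$ for measurable $E$.

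For the variations I fix $j$ and split $\gk=\mathfrak n\oplus\mathfrak n^\perp$, where $\mathfrak n=\ker dq$. The subspace $\mathfrak n$ is an ideal, and $\mathfrak n^\perp$ is $\Ad(K)$-invariant because the inner product is $\Ad$-invariant. Choose an orthonormal basis $X_1,\dots,X_r$ of $\gk$ with $X_1,\dots,X_p$ spanning $\mathfrak n^\perp$ and the rest spanning $\mathfrak n$. Then $Y_s=dq(X_s)$, $s\le p$, is an orthonormal basis of $\mathfrak h$, and $\mathcal V_j$ is independent of the basis. Since $q(x\exp tX)=q(x)\exp(t\,dq X)$, for a smooth cylinder $\psi$ on $H^{\N}$ we have
\[
X_{j,s}(\psi\circ Q)=(Y_{j,s}\psi)\circ Q \quad (s\le p),\qquad X_{j,s}(\psi\circ Q)=0\quad(s>p).
\]

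\emph{Inequality $\mathcal V^K_j(f\circ Q)\ge\mathcal V^H_j(f)$.} Given an $\mathfrak h$-valued test field $\Psi$ on $H^{\N}$ with $\|\Psi\|_\infty\le1$, lift it to $\Phi=(\Psi_1\circ Q,\dots,\Psi_p\circ Q,0,\dots,0)$. This field has the same sup norm and satisfies $\diver_j\Phi=(\diver_j\Psi)\circ Q$. By the measure preservation above, $\int f\circ Q\,\diver_j\Phi\,dm=\int f\,\diver_j\Psi\,dm$. Taking the supremum over $\Psi$ gives the inequality.

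\emph{Inequality $\mathcal V^K_j(f\circ Q)\le\mathcal V^H_j(f)$.} This is the main obstacle, because a general test field $\Phi$ on $K^{\N}$ is not a pullback. I would average it over $N=(\ker q)^{\N}$, a closed normal subgroup with $K^{\N}/N\cong H^{\N}$, in the spirit of Proposition~\ref{prop:averaging}. The function $f\circ Q$ is $N$-invariant, so only the averaged divergence contributes. However, right translation by $h\in\ker q$ rotates the frame by $\Ad(h^{-1})$, so the components must be transported before averaging. Concretely, set
\[
\tilde\Phi(x)=\int_N \Ad(h_j)\Phi(xh)\,dm_N(h).
\]
As in the proof of Proposition~\ref{prop:equivariant}, $\tilde\Phi$ has sup norm at most $1$. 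It satisfies $\int f\circ Q\,\diver_j\Phi=\int f\circ Q\,\diver_j\tilde\Phi$, and $\tilde\Phi$ is $N$-invariant, hence of the form $\Psi'\circ Q$. The orthogonal projection $\pi$ onto $\mathfrak n^\perp$ commutes with $\Ad$ and does not increase pointwise norms. The $\mathfrak n$-components of $\tilde\Phi$ contribute $X_{j,s}(\cdot)$ of a $Q$-pullback, which vanishes; I would check this by integrating by parts against the $N$-invariant function $f\circ Q$. Hence we may replace $\tilde\Phi$ by $\pi\tilde\Phi$. Transporting $\pi\tilde\Phi$ by $dq$ gives an $\mathfrak h$-valued field $\Psi$ on $H^{\N}$ with $\|\Psi\|_\infty\le1$ and $\int f\circ Q\,\diver_j\Phi=\int f\,\diver_j\Psi$, which yields the reverse inequality.

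If smoothness of $\tilde\Phi$ causes trouble, I would instead test against smooth $f$, where the integral gradient formula for $\mathcal V_j$ gives the equality pointwise because $dq$ is an isometry on $\mathfrak n^\perp$. The general case would then follow by approximating with $E_n$ and mollifiers and using lower semicontinuity (Proposition~\ref{prop:equivariant}) on both sides. With $\mathcal V^K_j(f\circ Q)=\mathcal V^H_j(f)$ for every $j$, summing with the weights $a_j$ gives $\mathcal V_a^K(f\circ Q)=\mathcal V_a^H(f)$. Applying this to $f=\one_E$ shows that Haar mass and weighted perimeter are preserved under inverse images.
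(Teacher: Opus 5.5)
Your proposal is correct and follows the paper's route. For $\mathcal V_j^K(f\circ Q)\geq\mathcal V_j^H(f)$ you lift horizontally via $\gk=\mathfrak n\oplus\mathfrak n^\perp$; for the reverse inequality you average over the fibres of $(\ker q)^{\N}$, project to $\mathfrak n^\perp$, and use that vertical derivatives integrate to zero against the $N$-invariant function $f\circ Q$. The one difference is that you transport by $\Ad(h_j)$ before averaging, whereas the paper notes that $\Ad(h)Z=Z$ for $h\in\ker q$ and $Z\in\mathfrak n^\perp$, so horizontal coefficients can be averaged directly. Your $\tilde\Phi$ is strictly only $\Ad$-equivariant, $\tilde\Phi(xn)=\Ad(n_j^{-1})\tilde\Phi(x)$, not invariant, but this is harmless: only $dq\circ\pi\tilde\Phi=dq\circ\tilde\Phi$ must descend, and it does because $q(n_j)=1$.
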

\begin{proof}
Write $N=\ker q$ and split $\gk=\mathfrak n\oplus\mathfrak n^\perp$ orthogonally. Both summands are $\Ad(K)$-invariant, and the second maps isometrically onto $\mathfrak h$. Quotient Haar measure is the pushforward of $m_K$, proving the norm identity. Lifting a  field on $H^{\N}$ horizontally gives an admissible  field on $K^{\N}$ with the same norm and with divergence pulled back from $H^{\N}$. This proves  $\mathcal V_j^K(f\circ Q)\geq\mathcal V_j^H(f)$.

For the reverse inequality, start with a cylinder field $\Phi$ on $K^{\N}$. Average its coefficients along the fibres of $Q$ and project to $\mathfrak n^\perp$. The resulting field descends to a smooth cylinder field on $H^{\N}$ and has norm at most one. For a vertical basis vector, the fibre average of its derivative is zero, by invariance of Haar measure on $N$. For a horizontal basis vector, differentiation commutes with fibre averaging and descends to the corresponding derivative on $H$.

To justify the latter assertion explicitly, if $Z\in\mathfrak n^\perp$ and $h\in N$, then $\Ad(h)Z=Z$: normality preserves the horizontal space and $dq(\Ad(h)Z)=dq(Z)$, on which $dq$ is injective. Thus $\exp(tZ)$ commutes with each fibre translation. These identities apply to the finitely many coordinates of the  field, including disconnected kernels. Haar disintegration and Fubini-Tonelli Theorem (see \cite{afp, heyer, rudin}) now identify the original integral against $f\circ Q$ with the descended  integral against $f$. Taking suprema proves the reverse inequality and summing gives the weighted identity.
\end{proof}

\begin{corollary}[Criterion of Isoperimetric Comparison]\label{cor:profiles}
Under the hypotheses of Theorem~\ref{thm:quotient}, assume $H$ is nontrivial and $a_j\to\infty$. Then for $v \in ]0,1[$ we have
\[
I_{K,a}(v)\leq I_{H,a}(v).
\]
Moreover the infimum on $K^{\N}$ restricted to sets of the form $Q^{-1}(E)$ equals $I_{H,a}(v)$ and is attained.
\end{corollary}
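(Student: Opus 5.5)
The plan is to transport sets along the pullback map $E\mapsto Q^{-1}(E)$ from $H^{\N}$ to $K^{\N}$ and apply Theorem~\ref{thm:quotient} directly. First, since $H$ is a nontrivial compact connected Lie group and $a_j\to\infty$, Theorem~\ref{thm:isoperimetry} (the final part of Theorem~\ref{thm:intro}) applies to $H^{\N}$. It supplies a measurable set $E_*\subset H^{\N}$ with $m_H^{\otimes\N}(E_*)=v$ and $\mathcal P_a^H(E_*)=I_{H,a}(v)<\infty$. The same theorem applied to $K^{\N}$ shows that $I_{K,a}(v)$ is finite. Note that $K$ is nontrivial because $q$ is onto $H$.

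Next, for any measurable $E\subset H^{\N}$, the identity $\one_{Q^{-1}(E)}=\one_E\circ Q$ lets us use Theorem~\ref{thm:quotient}. It gives $m(Q^{-1}(E))=\|\one_E\circ Q\|_1=\|\one_E\|_1=m_H^{\otimes\N}(E)$ and $\mathcal P_a^K(Q^{-1}(E))=\mathcal V_a^K(\one_E\circ Q)=\mathcal V_a^H(\one_E)=\mathcal P_a^H(E)$. Measurability of $Q^{-1}(E)$ holds because $Q$ is continuous, hence Borel. Null sets are also respected, since $Q_*m=m_H^{\otimes\N}$ factorwise by Proposition~\ref{prop:averaging} and uniqueness in Proposition~\ref{prop:measure}. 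So the assignment is well defined modulo null sets.

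Applying this to $E_*$ yields a competitor $Q^{-1}(E_*)$ of mass $v$ on $K^{\N}$ with perimeter $I_{H,a}(v)$. Hence $I_{K,a}(v)\le I_{H,a}(v)$.

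For the restricted infimum, every set of the form $Q^{-1}(E)$ with mass $v$ comes from an $E$ of mass $v$ with the same perimeter. Therefore
\[
\inf\{\mathcal P_a^K(Q^{-1}(E)) : m(Q^{-1}(E))=v\}=\inf\{\mathcal P_a^H(E): m_H^{\otimes\N}(E)=v\}=I_{H,a}(v),
\]
and the infimum is attained at $Q^{-1}(E_*)$.

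The only delicate point is the convention for the restricted class. The sets considered are exactly the preimages $Q^{-1}(E)$ of measurable $E$, taken modulo $m$-null sets. The identification of mass and perimeter must not depend on the choice of representative. This follows from the measure-preserving property above, so I expect no real obstacle beyond this bookkeeping.
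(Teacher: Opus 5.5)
Your proposal is correct and is essentially the paper's argument. You lift the minimizer on $H^{\N}$ given by Theorem~\ref{thm:isoperimetry} through $E\mapsto Q^{-1}(E)$, using the mass and perimeter identities of Theorem~\ref{thm:quotient}; this yields both the inequality and the attained restricted infimum. Your added remarks on measurability and null sets simply spell out what the paper's two-line proof leaves implicit.
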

\begin{proof}
Every quotient competitor lifts with the same mass and perimeter. Lifting a quotient minimizer proves both assertions. Note that  there is no claim that every unrestricted minimizer is saturated by $\ker Q$.
\end{proof}

\begin{problem}The quotient metric hypothesis is essential in Corollary \ref{cor:profiles}; arbitrary metrics on specific compact Lie groups  need not identify horizontal lengths. Similarly, the weights must be the same on both products. Together with bilateral translation invariance, this gives the framework to go ahead  in terms of category theory and explore functorial and equivariant group theoretic contents in the present perspective of investigation.
\end{problem}

\section{Proof of the second main theorem and some nonabelian examples}\label{proof2}
We proceed to prove Theorem \ref{thm:dual}, but it's appropriate  to note that compactness may be insufficient for a result of the type of Theorem \ref{thm:dual} (or of type of Theorem \ref{thm:intro}). For $K=\T$, we have $X=\partial_x$ and
\[
c_{\T}=\int_0^1\min(t,1-t)\,dt=\tfrac14.
\]
Thus \eqref{eq:introtail} has the constant $1/4$. It is optimal in the one coordinate averaging estimate: note that  $f=\one_{[0,1/2[}$ satisfies \[\left\|f-\int_K f dm_K\right\|_1=1/2\qquad \mbox{and} \qquad \mathcal V(f)=2.\] A cylinder $\{x:x_j\in[0,v[\}$ has mass $v$ and perimeter $2a_j$. When $a_j=1$, half circle cylinders in distinct coordinates have pairwise $L^1$-distance $1/2$ and perimeter $2$. Thus compactness of the underlying group alone is insufficient.

\begin{proof}[Proof of Theorem \ref{thm:dual}]
If (i) holds, a vector of length at most $R$ can use only coordinates with $a_j\leq R$, of which there are finitely many, and each coefficient satisfies $|k_j|\leq R/a_j$. Conversely a bounded subsequence of weights gives infinitely many distinct coordinate characters of bounded length. This proves (i) equivalent to (ii).

The characters form an orthonormal basis, by finite   dimensional Fourier theory and density of cylinders. Define $\Lambda_a$ on those square summable functions $u \in L^2(\T^\N,m)$ for which \[\sum^\infty_{k=1} \ell_a(k)^2 \ |\widehat u(k)|^2<\infty.\] The diagonal operator $(I+\Lambda_a)^{-1}$ has eigenvalues $(1+\ell_a(k))^{-1}$; finite   rank truncation proves compactness exactly when every bounded length set is finite. Finally real trigonometric functions with a chosen phase give \[2\pi \ \ |k_j| \ \ |\widehat f(k)|\leq\mathcal V_j(f).\] Multiply by $a_j$ and sum over the support of $k$. The result follows.
\end{proof}

The above proof deals with a spectral characterization on the dual group, not an identification of the $L^1$ variation with the graph norm of $\Lambda_a$. Its usefulness here is that the same weight condition controls group characters and weighted BV compactness. This behaviour is peculiar of tori, due to the specific structure of the character group.

\begin{remark} We want to note why the connected factors provide a real restriction in Theorem \ref{thm:dual}. If a compact Lie factor $K$ is disconnected, choose a nontrivial union $D$ of its connected components. All Lie derivatives vanish, while $b=m_K(D)$ lies in $]0,1[$. The cylinders $\{x:x_j\in D\}$ then have zero weighted perimeter for every choice of weights, but pairwise $L^1$-distance $2b(1-b)$. Thus connectedness cannot simply be dropped from Theorems \ref{thm:intro} and \ref{thm:dual}. More generally, in a totally disconnected group every continuous homomorphism $\R\to G$ is trivial. A variation defined solely from continuous one-parameter subgroups vanishes on an infinite profinite group. On $\Z(2)^{\N}$ the coordinate half spaces again form a separated family. Therefore profinite groups should be studied through different analytic notions. 
\end{remark}

Let' see  what happens to nonabelian  compact groups.

\begin{remark} \label{su2} The nonabelian group $\SU(2)^{\N}$ to the countable power provides an interesting framework of application of Theorem \ref{thm:classical} and differs from that of Theorem \ref{thm:dual}. Identify $\SU(2)$ with the unit quaternions, carrying the round metric on $\mathbb{S}^3$. Quaternion multiplication acts by orthogonal maps on both sides, so this metric is bi-invariant. The normalized radial density about the identity is $(2/\pi)\sin^2r\,dr$, $0\leq r\leq\pi$. Hence
\[
c_{\SU(2)}=\frac2\pi\int_0^\pi r\sin^2r\,dr=\frac\pi2.
\]
The geodesic cap $D_r$ has
\[
m_K(D_r)=\frac{r-\sin r\cos r}{\pi},\qquad
\mathcal P_K(D_r)=\frac{2\sin^2r}{\pi}\quad(0<r<\pi).
\]
The cylinder $\{x:x_j\in D_r\}$ therefore has weighted perimeter $2a_j\sin^2r/\pi$. These are explicit competitors at every volume, without asserting that they are minimizers in the infinite product. For $a_j\to\infty$, Theorem~\ref{thm:intro} applies to this nonabelian compact group. The central quotient
\[
\SU(2)\longrightarrow\SO(3)=\SU(2)/\{\pm I_2\}
\]
with the quotient metric gives an instance of Theorem~\ref{thm:quotient}; the corresponding profile comparison is instructive as application of Corollary \ref{cor:profiles}.
\end{remark}

\section{Proof of the third main theorem}\label{proof3}

We  note that there is a natural obstruction for noncompact  locally compact groups.

\begin{proposition}[Finite Invariant Volume and Compactness]\label{prop:finitehaar}
A locally compact group has finite nonzero  Haar measure if and only if it is compact. On a noncompact locally compact group there are infinitely many pairwise disjoint left translates of a fixed relatively compact identity neighborhood.
\end{proposition}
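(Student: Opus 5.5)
The plan is to prove the second assertion first, since the compactness criterion follows from it. Let $G$ be a locally compact group with left Haar measure $m_G$. Fix a relatively compact open identity neighborhood $U$, and choose a symmetric open identity neighborhood $V$ with $V^2\subseteq U$; then $\overline V$ is compact. We construct translates $g_1V,g_2V,\ldots$ inductively. Suppose $g_1,\ldots,g_n$ have been chosen. The set $C_n=\bigcup_{i\leq n}g_i\overline V\,\overline V^{-1}$ is a finite union of compact sets, hence compact. Since $G$ is noncompact, $C_n\neq G$, so we may pick $g_{n+1}\notin C_n$.

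Disjointness follows from the choice of $g_{n+1}$. If $g_{n+1}V\cap g_iV\neq\emptyset$ for some $i\leq n$, then $g_{n+1}\in g_iVV^{-1}\subseteq C_n$, a contradiction. So the translates $g_nV$ are pairwise disjoint. The statement asks for translates of a fixed relatively compact neighborhood, and $V$ itself is one. If one insists on translates of the originally given $U$, run the same construction with $U\overline U^{-1}$ in place of $V\overline V^{-1}$; this set is still relatively compact, so it is still not all of $G$.

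Now the equivalence. If $G$ is compact, the Haar measure is a Radon measure, so $m_G(G)<\infty$. It is nonzero because Haar measures are nonzero by definition, or equivalently because open sets have positive measure. Conversely, suppose $G$ is noncompact. Haar measure is positive on nonempty open sets, so $m_G(V)=c>0$. Left invariance gives $m_G(g_nV)=c$ for every $n$. Countable additivity over the disjoint translates then gives
\[
m_G(G)\geq\sum_{n}m_G(g_nV)=\infty.
\]
Hence a finite nonzero Haar measure forces compactness. Both directions use only properties of Haar measure recalled in Section \ref{sec:haar}, namely Haar's theorem \cite[Theorem 2.8]{hofmor}.

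The only delicate step is showing that $C_n\neq G$. This reduces to the fact that a product $\overline V\,\overline V^{-1}$ of compact sets is compact, being the image of $\overline V\times\overline V$ under the continuous map $(x,y)\mapsto xy^{-1}$. The rest of the argument is routine.
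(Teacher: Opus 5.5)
Your proof is correct and follows essentially the same route as the paper: you choose $g_n$ inductively outside the compact set $\bigcup_{k<n} g_k\overline{U}\,\overline{U}^{-1}$ (the paper writes $\overline{UU^{-1}}$), which gives pairwise disjoint translates of equal positive measure and hence $\mu(G)=\infty$. Finiteness in the compact case comes from the Radon property, as in the paper; the auxiliary symmetric $V$ with $V^2\subseteq U$ is harmless but unnecessary, as your own remark about running the construction directly with $U\overline{U}^{-1}$ already shows.
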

\begin{proof}
Choose an open relatively compact neighborhood $U$ with $0<\mu(U)<\infty$. If $G$ is noncompact, choose $g_n$ inductively outside the finite union $\bigcup_{k<n}g_k\overline{UU^{-1}}$. Then the sets $g_nU$ are disjoint, each of measure $\mu(U)$, so $\mu(G)=\infty$. In the compact case a Haar measure is finite on compact sets and can be normalized.
\end{proof}

The proof of our third main theorem can be presented now.

\begin{proof}[Proof of Theorem \ref{thm:tight}] Necessity follows by approximating the family by a finite $L^1$ net and choosing a common large ball for the finitely many integrable net functions. For sufficiency, the proof of Lemma~\ref{lem:tail}, with the Euclidean variable retained, gives the same uniform approximation by $E_n\mathcal F$. Averaging contracts Euclidean variation and does not increase the displayed tails.

For fixed $n$ these functions live on $\R^d\times K^n$. From the properties of the Euclidean translations and those of the Lie coordinate translations we see that Lemma~\ref{lem:translate} is satisfied. Convolutions with a smooth compactly supported approximate the identity therefore approximates the family uniformly in $L^1$. At each fixed scale the smoothed functions have uniformly bounded derivatives and are precompact on every compact set by Arzel\`a--Ascoli Theorem. Their Euclidean tails remain uniformly small: convolution with a kernel supported in a fixed Euclidean ball only enlarges the radius in the tightness estimate. This proves global $L^1$ precompactness at a fixed scale, then for $E_n\mathcal F$, and finally for $\mathcal F$ by the uniform tail coordinate estimate.
\end{proof}

\begin{example}[Escape by Group Translations]\label{ex:escape}
Let $B\subset\R^d$ be a bounded smooth domain of positive volume, and choose $t_k$ so that $B+t_k$ are pairwise disjoint. The sets
\[
\Omega_k=(B+t_k)\times K^{\N}
\]
have the same positive finite Haar measure and the same finite perimeter $\mathcal W_a(\one_{\Omega_k})=\mathcal P_{\R^d}(B)$. Their characteristic functions have pairwise $L^1$ distance $2|B|$. Thus global compactness cannot be deduced from volume and perimeter bounds on this noncompact locally compact group.
\end{example}

Theorem~\ref{thm:tight} is consistent with general Haar measure compactness criteria involving translations and tails \cite[Theorem 5.2]{krukowski}. It does not assert existence of isoperimetric minimizers on every noncompact group: a minimizing sequence need not be tight. 

\section{A relation with Carnot groups and sub-Finsler geometry}\label{subfinsler}

We compare the preceding results with some results on metric Lie groups, which are available in \cite{ledonne}. The comparison concerns the actual hypotheses and conclusions of that source. It separates established finite   dimensional geometry from the analytic estimates needed for the cartesian product, which supports the novelty of Theorems \ref{thm:intro} and \ref{thm:dual} in the literature on geometric measure theory on Lie groups. First of all, we note that a  \textit{sub-Finsler Lie group} in \cite[Definition 7.1.6]{ledonne} includes the full   tangent case. Its norms need not be smooth or strictly convex; see also \cite[Definitions 3.2.6, 3.2.7 and Remark 3.2.9]{ledonne}. Thus the following elementary identification places our finite   stage geometry within that framework.

\begin{proposition}[Finsler Identification in Finite Dimension]\label{prop:finsler}
For $n\geq1$ and a nontrivial compact connected Lie group $K$, define a norm on $\gk^n$ by
\[
F_{a,n}(v_1,\ldots,v_n)=\max_{1\leq j\leq n}\frac{|v_j|}{a_j}.
\]
Its left   invariant extension to $K^n$ is bi-invariant, and its dual norm is
\[
F_{a,n}^*(\xi_1,\ldots,\xi_n)=\sum_{j=1}^n a_j|\xi_j|.
\]
For $u\in L^1(K^n,m_K^{\otimes n})$, its divergence variation with this tangent norm is exactly
\begin{equation}\label{eq:finslerbv}
\sup_{\substack{Y\text{ smooth on }K^n \\
\\
F_{a,n}(Y)\leq1}
}
\left|\int_{K^n}u\,\diver Y\,dm_K^{\otimes n}\right|
=\sum_{j=1}^n a_j\mathcal V_j(u),
\end{equation}
with $\mathcal V_j(u)$  as per Definition \ref{def:variation}. The associated length distance is
\begin{equation}\label{eq:finslerdistance}
d_{a,n}(x,y)=\max_{1\leq j\leq n}\frac{d_K(x_j,y_j)}{a_j}.
\end{equation}
Here divergence is taken with respect to normalized product Haar measure.
\end{proposition}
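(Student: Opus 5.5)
The proof splits into four checks: bi-invariance of the norm, the dual norm, the variation identity \eqref{eq:finslerbv}, and the distance formula \eqref{eq:finslerdistance}. I would take them in that order.

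\emph{Bi-invariance.} Left-invariant extension means declaring $F_{a,n}$ on each tangent space through left translation. This is bi-invariant precisely when $F_{a,n}$ is $\Ad(K^n)$-invariant. Here $\Ad(g_1,\ldots,g_n)$ acts blockwise by $\Ad(g_j)$, and each $\Ad(g_j)$ is orthogonal for the chosen $\Ad(K)$-invariant inner product. So every $|v_j|$ is preserved, and hence so is the maximum.

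\emph{Dual norm.} This is the standard duality between a weighted $\ell^\infty$ sum of Euclidean blocks and the weighted $\ell^1$ sum. The upper bound comes from Cauchy--Schwarz in each block: $\sum_j\langle\xi_j,v_j\rangle\le\sum_j a_j|\xi_j|\,|v_j|/a_j\le F^*$-candidate times $F_{a,n}(v)$. Equality is attained by $v_j=a_j\xi_j/|\xi_j|$ (and $v_j=0$ when $\xi_j=0$).

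\emph{Variation identity.} Write a smooth field on $K^n$ in the left-invariant frame, $Y=\sum_{j,s}Y_{j,s}X_{j,s}$. Its divergence with respect to Haar measure is $\sum_j\diver_j Y_j$, because each $X_{j,s}$ is divergence free. The constraint $F_{a,n}(Y)\le1$ says exactly that $|Y_j|\le a_j$ pointwise for every $j$, independently in $j$. Setting $Y_j=a_j\Phi_j$ with $\|\Phi_j\|_\infty\le1$ gives
\[
\int u\,\diver Y\,dm=\sum_j a_j\int u\,\diver_j\Phi_j\,dm .
\]
\begin{itemize}
\item[$\le$:] bound each term by $a_j\mathcal V_j(u)$.
\item[$\ge$:] choose each $\Phi_j$ nearly optimal for $\mathcal V_j(u)$, with a sign fixed so that its integral is positive. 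Because the constraints decouple, these choices combine into one admissible $Y$.
\end{itemize}
Cylinder fields on $K^{\N}$ depending on coordinates beyond $n$ can be averaged over those coordinates, so the suprema defining $\mathcal V_j$ for functions on $K^n$ may be taken over smooth fields on $K^n$. That the left-hand side of \eqref{eq:finslerbv} is finite exactly when all $\mathcal V_j(u)$ are finite is then automatic.

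\emph{Distance.} For a piecewise $C^1$ curve $\gamma=(\gamma_1,\ldots,\gamma_n)$, the left-trivialized velocity has $F_{a,n}$-norm $\max_j|\dot\gamma_j|/a_j$.
\begin{itemize}
\item[Lower bound:] $\mathrm{length}(\gamma)\ge\int|\dot\gamma_j|/a_j\,dt\ge d_K(x_j,y_j)/a_j$ for each $j$.
\item[Upper bound:] let $D=\max_j d_K(x_j,y_j)/a_j$. Take minimizing $d_K$-geodesics $\sigma_j$ from $x_j$ to $y_j$, parametrized on $[0,D]$ with constant speed $d_K(x_j,y_j)/D\le a_j$. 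These exist by completeness, as in Lemma~\ref{lem:translate}. The product curve then has $F_{a,n}$-speed at most $1$, so its length is at most $D$. If $D=0$ there is nothing to prove.
\end{itemize}

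The main obstacle is the $\ge$ direction of \eqref{eq:finslerbv}. It needs care with two things: the reduction from cylinder fields on $K^{\N}$ to fields on $K^n$, and the fact that the optimal single-block fields can be chosen independently. Both follow from the decoupling of the sup-norm constraint noted above.
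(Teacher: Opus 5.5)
Your proposal is correct and follows essentially the same route as the paper. It uses blockwise orthogonality of $\Ad$ for bi-invariance and the product-of-balls structure of the unit ball for the dual norm. For \eqref{eq:finslerbv} it uses decoupled blockwise optimization with consistent sign choices, together with averaging over the unused coordinates. For \eqref{eq:finslerdistance} it uses coordinatewise lower bounds and synchronized constant-speed minimizing geodesics; parametrizing on $[0,D]$ instead of the unit interval is only a cosmetic difference.
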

\begin{proof}
The unit ball is the product of the balls $|v_j|\leq a_j$. Maximizing a linear functional separately on each ball proves the dual-norm formula. Orthogonality of the adjoint action gives bi-invariance. Write a smooth vector field in the invariant frame as $Y=(Y_1,\ldots,Y_n)$. The constraint in \eqref{eq:finslerbv} is exactly $|Y_j|\leq a_j$ for every $j$, and \[\diver Y=\sum^n_{j=1}\diver_jY_j.\] Independent optimization of these finitely many blocks, choosing their signs consistently, gives \eqref{eq:finslerbv}, also when a variation is infinite. For cylinder lifts to $G$, the averaging  in the unused coordinates shows that the finite   stage and product definitions agree. Every joining curve has length at least $d_K(x_j,y_j)/a_j$ for each $j$. Conversely, run minimizing geodesics in the $n$ factors on the same unit time interval, each at constant speed. Their product has length equal to the maximum in \eqref{eq:finslerdistance}.
\end{proof}

For smooth $u$, the integrand is consequently \[F_{a,n}^*(du)=\sum^n_{j=1} a_j|\nabla_j u|.\] The tangent norm is the weighted maximum, whereas the gradient norm is its dual weighted sum. This distinction is necessary for the identification. Haar measure remains specified: we make no identification with a differently normalized Finsler volume or Hausdorff measure. Completeness and geodesicity of these finite   stage distances are also direct applications of \cite[Theorem 7.1.7]{ledonne}; equivalence with other full   tangent invariant norms is \cite[Proposition 7.1.8]{ledonne}.

\begin{remark}It should be noted that \cite[Theorem 6.2.10]{ledonne} assumes finite  dimension to obtain a smooth homogeneous manifold. Berestovskii's Theorem, \cite[Theorem 7.4.1]{ledonne}, starts with a Lie coset manifold and an invariant geodesic distance inducing its topology. These finite dimensional hypotheses do not hold for $G$ in the assumptions of Theorems \ref{thm:intro} and \ref{thm:dual}.
\end{remark}

In fact,  the following result  help to identify  some obstructions between  the theory of infinite dimensional Lie groups and that  of Carnot groups. 

\begin{proposition}[Two Obstructions to a Carnot Interpretation]\label{prop:carnotobstruction}
For a nontrivial compact connected Lie group $K$, the compact group $K^{\N}$ has infinite  dimension. Moreover no compatible metric on $K^{\N}$ admits a surjective similarity with ratio different from one.
\end{proposition}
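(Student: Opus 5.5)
The proposition makes two separate claims, and I would prove them separately.

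\emph{Infinite dimension.} Write $r=\dim K\geq1$. For each $n$, the closed subgroup $K^n\times 1^{\{n+1,n+2,\ldots\}}$ is a compact Lie group of dimension $nr$ embedded in $K^{\N}$. So the covering (or inductive) dimension of $K^{\N}$ is at least $nr$ for every $n$, by monotonicity of dimension for closed subspaces of a compact metrizable space. An equivalent route uses the pro-Lie description from Appendix~\ref{app:preserved-background}: the Lie algebra $\mathfrak L(K^{\N})=\gk^{\N}$ is infinite dimensional. Either way $K^{\N}$ cannot be a finite dimensional manifold, and so cannot carry a Carnot group structure, which requires a finite dimensional connected nilpotent Lie group.

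\emph{No nontrivial similarity.} Suppose $d$ is a compatible metric and $\sigma:K^{\N}\to K^{\N}$ is surjective with $d(\sigma x,\sigma y)=\lambda d(x,y)$ for some $\lambda\neq1$. Since $\sigma^{-1}$ is a similarity of ratio $1/\lambda$, we may assume $0<\lambda<1$. Then $\sigma$ is a contraction of a compact metric space onto itself, so
\[
\operatorname{diam}(K^{\N})=\operatorname{diam}\sigma(K^{\N})=\lambda\operatorname{diam}(K^{\N}).
\]
Compactness makes the diameter finite, and nontriviality of $K$ makes it positive, which is a contradiction. The only care needed is to note that surjectivity is essential and that $\lambda=0$ is excluded, since a similarity of ratio $0$ would collapse the space to a point and could not be surjective onto a nontrivial group.

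I expect neither step to be a real obstacle. The main point is to state precisely which notion of dimension is meant and to cite its monotonicity. The second claim is purely metric: it uses only compactness, not the group structure. It rules out the Carnot dilations $\delta_\lambda$, which are exactly surjective similarities of ratio $\lambda\neq1$.
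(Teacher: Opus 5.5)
Your proposal is correct and follows essentially the same route as the paper: monotonicity of covering dimension under closed embeddings of ever larger finite-dimensional pieces, together with the diameter identity $D=\lambda D$ with $0<D<\infty$. The differences are cosmetic: the paper embeds a closed $rn$-cube and appeals to the dimension of a cube rather than of the manifold $K^n$, and it reads off $D=\lambda D$ directly for every ratio, so your reduction to $0<\lambda<1$ via $\sigma^{-1}$ is harmless but unnecessary.
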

\begin{proof}
Put $r=\dim K\geq1$. A coordinate chart contains a closed embedded $r$-cube. Fixing all coordinates after $n$ at the identity embeds a closed $rn$-cube in $K^{\N}$. Monotonicity of the dimension for closed subsets of metrizable spaces, and the dimension of a cube, imply $\dim K^{\N}\geq rn$ for every $n$; see \cite{willard}. For a compatible metric let $D$ be the diameter. Compactness and nontriviality give $0<D<\infty$. A surjective similarity of ratio $\lambda$ would imply $D=\lambda D$, hence $\lambda=1$.
\end{proof}

The metric characterization in \cite[Theorem 11.4.1]{ledonne} requires local compactness, geodesicity, isometric homogeneity \emph{and} a nontrivial metric dilation. The dilation hypothesis is therefore unavailable here. A nontrivial Carnot group as in \cite[Definition 11.1.1]{ledonne} is simply connected and nilpotent, and is noncompact. It has infinite total Haar measure. Replacing $K$ by such a group in our proof would remove the normalized coordinate Haar averages, so it is not a substitution allowed by Theorem~\ref{thm:intro}.

\begin{remark}There is also a distinction between our bilateral invariance and a proper horizontal distribution. If a subspace $V\subset\gk$ is $\Ad(K)$-invariant, then $[\gk,V]\subset V$, so $V$ is already a Lie subalgebra. If $V$ is bracket generating, it follows that $V=\gk$. Thus a proper bracket-generating distribution cannot retain the same adjoint invariance used in Proposition~\ref{prop:equivariant}. One may study left   invariant sub-Riemannian structures on compact Lie groups, but their quotient and right    translation properties are peculiar. In fact,  $\SU(2)$ of Remark \ref{su2} uses the full tangent space.
\end{remark}

Note that \cite[Theorem 12.1.7]{ledonne} identifies  asymptotic cones of  simply connected nilpotent sub-Finsler Lie groups, while \cite[Theorem 12.1.8]{ledonne}  tangents of equiregular sub-Finsler manifolds.

\begin{remark}  \cite[Theorem 12.1.7]{ledonne} treats varying CC-bundle structures on a fixed manifold: a bracket-generating limiting distribution and a boundedly compact limiting distance give uniform convergence of the distances on compact sets. Unfortunately, none of these results supplies convergence or compactness of the weighted distributional variations on products with an increasing number of factors. In particular, metric convergence alone does not give the uniform $L^1$ estimate in Lemma~\ref{lem:tail}. However, to the best of our knowledge, an extension of \cite[Theorems 12.1.7 and 12.1.8]{ledonne} to the case of infinite dimensional Lie groups seems to be a corner of the literature which has not been investigated yet.
\end{remark}

\appendix

\section{Some  facts on topological groups  and measure theory}
\label{app:preserved-background}

We collect the classical facts underlying the constructions above. They also
clarify the distinction between the topology of a compact group, an invariant
distance, and a separately specified variation functional.

\subsection{Uniformities and invariant metrics}

A uniformity on a set $X$ is a nonempty family $\mathcal U$ of subsets of
$X\times X$, called entourages, which is closed under finite intersections
and passage to supersets, contains the diagonal in each of its members,
satisfies $U^{-1}\in\mathcal U$ for every $U\in\mathcal U$, and has the
property that for each $U\in\mathcal U$ some $V\in\mathcal U$ satisfies
$V\circ V\subseteq U$. Here
\[
 U^{-1}=\{(y,x):(x,y)\in U\},\qquad
 V\circ V=\{(x,z):\text{some }y\text{ satisfies }(x,y),(y,z)\in V\}.
\]
The sets $U[x]=\{y:(x,y)\in U\}$ form a neighbourhood basis at $x$.
The induced topology is Hausdorff precisely when
$\bigcap_{U\in\mathcal U}U=\{(x,x):x\in X\}$. A separated uniformity is
induced by a metric precisely when it has a countable basis. A Hausdorff
space admits a compatible uniformity precisely when it is completely
regular; in particular, every compact Hausdorff space does so
\cite{willard}.

For a Hausdorff topological group $H$, a canonical left uniformity has basic
entourages
\[
 U_V=\{(x,y):x^{-1}y\in V\},
\]
where $V$ runs over the identity neighbourhoods. Left translations are
homeomorphisms; they are not, in general, group homomorphisms. The
Birkhoff--Kakutani Metrization Theorem says that $H$ has a compatible
left   invariant metric if and only if its identity has a countable
neighbourhood basis \cite{hofmor}. Such a metric can be written
\[
 d(x,y)=\ell(x^{-1}y),
\]
where $\ell(e)=0$, $\ell(g)>0$ for $g\ne e$,
$\ell(g^{-1})=\ell(g)$, and $\ell(gh)\leq\ell(g)+\ell(h)$; the sets
$\{g:\ell(g)<r\}$ must form a neighbourhood basis. This is a group length,
not a Banach algebra norm. If $H$ is locally compact, metrizability is also
equivalent to the identity being a $G_\delta$ set.

For clarity, complete regularity and metrizability do not coincide. The
countable cube $[0,1]^{\mathbb N}$ is compact and metrizable, whereas an
uncountable cube $[0,1]^I$ is compact Hausdorff and nonmetrizable. Indeed,
a countable collection of basic neighbourhoods of the origin restricts only
countably many coordinates and cannot be a neighbourhood basis when $I$
is uncountable. The usual Schwartz space $\mathcal S(\mathbb R)$, with
seminorms
\[
 p_{k,l}(f)=\sup_{x\in\mathbb R}(1+|x|)^k|f^{(l)}(x)|,
 \qquad k,l\geq0,
\]
is a metrizable, completely regular Fr\'echet space. These examples emphasize
that an infinite dimensional topology may be metrizable without being the
topology of a finite   dimensional manifold.

\subsection{Finite dimensional quotients and finite support tori}

The torus group $\mathbb T$ is isomorphic to the circle group
$\mathbb S^1$, see \cite[Chapter 1]{hofmor}; its cartesian product
is a compact connected abelian  group, with quotient maps
\[
 \mathbb T^{\mathbb N}\longrightarrow\mathbb T^n,
 \qquad (x_j)_{j\geq1}\longmapsto(x_1,\ldots,x_n).
\]
Each $\mathbb T^n$ is also embedded as the subgroup with zero coordinates
after $n$. These are distinct operations: the projection forgets
coordinates, whereas the embedding specifies their values.

\begin{proposition}[Finite   Support Subgroups]\label{prop:background-direct-sum}
The subgroup
\[
 D=\mathbb T^{(\mathbb N)}
 =\{x\in\mathbb T^{\mathbb N}:x_j=0\text{ for all but finitely many }j \in \N\}
\]
is dense, proper, and Haar null in $\mathbb T^{\mathbb N}$. With its subspace
topology, the homomorphism 
\[s : x \in D \longmapsto s(x)=\sum^\infty_{j=1} x_j \in \T\] is discontinuous, although its restriction to every finite
torus is continuous.
\end{proposition}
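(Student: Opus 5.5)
The statement has four parts, and I would prove them in order: $D$ is a subgroup, it is proper, it is Haar null, it is dense, and $s$ is continuous on each finite torus but discontinuous on $D$.

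\emph{Subgroup and properness.} That $D$ is a subgroup is immediate, since the union of two finite supports is finite. Properness is witnessed by the point $y$ with $y_j=1/2$ for every $j$; it lies in $\T^\N$ but not in $D$.

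\emph{Density.} A basic open set of the product topology restricts only finitely many coordinates, say $x_1,\ldots,x_n$, to open sets $U_1,\ldots,U_n\subset\T$. Choose $x_j\in U_j$ for $j\le n$ and $x_j=0$ for $j>n$. This point lies in $D$ and in the basic open set, so $D$ is dense.

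\emph{Haar nullity.} Write $D=\bigcup_n D_n$, where
\[
D_n=\{x:x_j=0\text{ for } j>n\}.
\]
Each $D_n$ is a countable intersection of cylinders $\{x_j=0\}$. By the product structure of $m$ from Proposition~\ref{prop:measure}, for every $N>n$ we have $m(D_n)\le m_\T(\{0\})^{N-n}=0$. A countable union of null sets is null, so $m(D)=0$.

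\emph{Continuity on finite tori.} On the embedded torus $\T^n$ the map $s$ equals $x_1+\cdots+x_n$. This is a finite sum of continuous coordinate maps, so it is continuous.

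\emph{Discontinuity on $D$.} This is the only step needing care: the natural witness sequence must lie in $D$ and converge to $0$ in the subspace topology, while its image under $s$ stays away from $0$. Take $z^{(n)}\in D$ with $z^{(n)}_j=1/2$ for $n<j\le 2n$ and $0$ otherwise. Then $z^{(n)}\to 0$ coordinatewise, because each fixed coordinate is eventually $0$. However
\[
s(z^{(n)})=n/2 \bmod 1,
\]
which equals $1/2$ for odd $n$. Along odd $n$ the images therefore do not converge to $s(0)=0$.

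Since the subspace topology on $D$ is metrizable (the product topology on $\T^\N$ is), sequential discontinuity suffices to show that $s$ is discontinuous.
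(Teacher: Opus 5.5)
Your proof is correct and follows essentially the same route as the paper: density because basic open sets restrict only finitely many coordinates, the constant sequence $1/2$ as a witness for properness, and the decomposition $D=\bigcup_n D_n$ with each $D_n$ inside a Haar-null cylinder. The differences are minor. For discontinuity, the paper uses simpler single-coordinate spikes $x^{(j)}$ (value $t\neq0$ in coordinate $j$ and zero elsewhere, so $s(x^{(j)})=t$ for every $j$). Your appeal to metrizability is unnecessary, since a convergent sequence whose images do not converge already refutes continuity in any topological space.
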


\begin{proof}Every nonempty basic open set restricts only finitely many coordinates and therefore contains a point of $D$. The constant sequence with value
$1/2+\mathbb Z$ shows that $D$ is proper. Write
$D=\bigcup_nD_n$, where $D_n=\{x:x_j=0\ (j>n)\}$. Each $D_n$ is contained
in the Haar-null set $\{x:x_{n+1}=0\}$, so $D$ is Haar null.
Fix $t\ne0$ in $\mathbb T$ and let $x^{(j)}$ have value $t$ in coordinate
$j$ and zero elsewhere. Then $x^{(j)}\to0$ in the subspace topology but
$s(x^{(j)})=t$ for every $j$.
\end{proof}

In particular, the subspace topology on $D$ must not be identified with
the final topology for the inclusions
$\mathbb T^n\hookrightarrow\mathbb T^{n+1}$ in the category of topological
spaces: the latter makes $s$
continuous by its defining universal property. This is one reason why
direct limits and projective limits require separate treatment \cite{G1}.

\begin{wrapfigure}{l}{0.38\textwidth}\label{butterfly}
\begin{minipage}{0.38\textwidth}
\centering
\begin{tikzpicture}[description/.style={fill=white,inner sep=2pt}]
\matrix (m) [matrix of math nodes, row sep=1.4em,
column sep=0.5em, text height=1.5ex, text depth=0.5ex]
{ \T^{(\N)} & & & & \T^\N \\
&&&&\\
\T^4 & & & &  \T^4  \\
&& \T^4 && \\ 
\T^3 &&&& \T^3 \\
&& \T^3 && \\
\T^2 &&&& \T^2 \\
&\T && \T & \\};

\path[->] (m-1-1) edge [<-<,line width=1pt] (m-3-1)
		 (m-3-1) edge [<-<<,line width=1pt] (m-4-3)
		         edge [<-<,line width=1pt](m-5-1)
		 (m-5-1) edge [<-<<,line width=1pt] (m-6-3)
		         edge [<-<,line width=1pt] (m-7-1)
		 (m-7-1) edge [<-<,line width=1pt] (m-8-2)
		 (m-8-2) edge [>->,line width=1pt]  (m-6-3)
		 (m-6-3) edge [->>,line width=1pt] (m-8-4)
		         edge [<-<<,line width=1pt] (m-5-5)
		         edge [-,line width=1pt](m-4-3)
		 (m-7-5) edge [<<-,line width=1pt] (m-5-5)
		 		 edge [->>,line width=1pt] (m-8-4)
		 (m-1-5) edge [->>,line width=1pt] (m-3-5)
		 (m-3-5) edge [>->>,line width=1pt] (m-4-3)
		 		 edge [->>,line width=1pt](m-5-5);
\end{tikzpicture}
\end{minipage}
\vspace{-10pt} 
\end{wrapfigure}

A group has \emph{no small subgroups} if some identity neighbourhood
contains no nontrivial subgroup. For compact groups, this is equivalent
to being a finite   dimensional Lie group \cite{hofmor}. Every identity
neighbourhood in $\mathbb T^{\mathbb N}$ contains a full coordinate circle
outside a finite set of coordinates, so this group fails that condition.
In fact it is not locally contractible. If a neighbourhood $U$ of the
identity were contractible within $\mathbb T^{\mathbb N}$, choose a
coordinate circle contained in a basic neighbourhood inside $U$.
Composing its inclusion and the alleged contraction with projection onto
that coordinate would contract the identity map of the circle, which is
impossible. Thus this compact pro-Lie group is not a finite   dimensional or
Hilbert manifold. By contrast, every compact abelian Lie group has the
form $\mathbb T^n\times F$ with $F$ finite abelian, and carries an ordinary
Riemannian structure \cite{hofmor,hilgertneeb}.

\begin{remark} Looking at $\T^\N$, any time we stop at a given index $j_0 \in \N$ in this projective limit, we have a compact abelian Lie group $\T^{j_0}$ which is known to possess the structure of Riemannian manifold, namely that of product of  $j_0$-copies of $\mathbb{S}^1$ with the usual Riemannian metric of the circle. In other words, if we fix $j_0=4$ and ask $j=4$ for all $j\in \N$ such that $j \ge 4$, then 
\begin{equation}\label{diag7}
\begin{tikzpicture}[node distance=2cm, auto] 
\node (A) {$\T^\N$};
\node (B) [below of=A] {$\T^4$};
\draw[->>] (A) to node [] {} (B);
\node (C) [left of=B] {$\T^3$};
\draw[->>] (A) to node [] {} (C);
\node (E) [left of=C] {$\T^2$};
\draw[->>] (A) to node [swap] {} (E);
\node (D) [left of=E] {$\T$};
\draw[->>] (A) to node [swap] {} (D);
\draw[->>] (B) to node [swap] {} (C);
\draw[->>] (C) to node [swap] {} (E);
\draw[->>] (E) to node [swap] {} (D);
\end{tikzpicture}
\end{equation}
which induces naturally surjections on each factor, that is, 
\begin{equation}\label{diag8}
\begin{tikzpicture}[node distance=2cm, auto] 
\node (B) [below of=A] {$\T^4$};
\node (C) [left of=B] {$\T^3$};
\node (E) [left of=C] {$\T^2$};
\node (D) [left of=E] {$\T$};
\draw[->>] (B) to node [swap] {} (C);
\draw[->>] (C) to node [swap] {} (E);
\draw[->>] (E) to node [swap] {} (D);
\end{tikzpicture}
\end{equation}
Here we keep the structure of compact abelian Lie group, and in turn that of Riemannian manifold of dimension four. Here both the structure of compact Lie group and Riemannian manifolds are compatible in dimension four. On the other hand, we have also a construction of direct limit tori of the following type
\begin{equation}\label{diag9}
\begin{tikzpicture}[node distance=2cm, auto] 
\node (A) {$\T^{(\N)}$};
\node (B) [below of=A] {$\T^4$};
\draw[<-<] (A) to node [] {} (B);
\node (C) [left of=B] {$\T^3$};
\draw[<-<] (A) to node [] {} (C);
\node (E) [left of=C] {$\T^2$};
\draw[<-<] (A) to node [swap] {} (E);
\node (D) [left of=E] {$\T$};
\draw[<-<] (A) to node [swap] {} (D);
\draw[<-<] (B) to node [swap] {} (C);
\draw[<-<] (C) to node [swap] {} (E);
\draw[<-<] (E) to node [swap] {} (D);
\end{tikzpicture}
\end{equation}
which induces naturally injections of the following form 
\begin{equation}\label{diag10}
\begin{tikzpicture}[node distance=2cm, auto] 
\node (B) [below of=A] {$\T^4$};
\node (C) [left of=B] {$\T^3$};
\node (E) [left of=C] {$\T^2$};
\node (D) [left of=E] {$\T$};
\draw[<-<] (B) to node [swap] {} (C);
\draw[<-<] (C) to node [swap] {} (E);
\draw[<-<] (E) to node [swap] {} (D);
\end{tikzpicture}
\end{equation}

and also in this situation it is possible to endow $\T^4$ with the structure of compact abelian Lie group and of Riemannian manifold (of dimension four).
We can get what is called a ``butterfly diagram'' (see the diagram on the top-left in this page) if we want to summarize what we have just noted. In the ``left wing''  we find \eqref{diag9} without the middle injections from $\T$ to $\T^{(\N)}$, from  $\T^2$ to $\T^{(\N)}$ and from $\T^3$ to $\T^{(\N)}$. The same observation can be done looking at the ``right  wing'' , where surjections are now present. The ``central part'' presents bijections at the level of $\T^4$ and $\T^3$ but also that we may omit the term $\T^2$ without a significant lost for the whole diagram. This reflects the fact that we may construct ``long wings'' in a similar diagram, stopping at a given term $\T^{j_0}$ and producing again a chain of injections and surjections on the external parts of the diagram, but keeping possible jumps in the central part. The product torus and quotient questions for topological groups also appear in \cite{morris2019,morris2020}.
 \end{remark}

\subsection{Subgroup measure and nonmeasurability} Another useful fact to recall is the role of nonmeasurability in compact groups. The following result can be found in \cite{hhm}, but we provide an alternative argument since we believe that it is instructive in our present context of investigation.

\begin{proposition}[Measurable Subgroups]\label{prop:background-subgroups} \

Let $G$ be a compact group with normalized Haar measure $m$. 
\begin{enumerate}
\item[{\rm (i).}] If a subgroup $H$ is measurable, then either $m(H)=0$, or $H$ is
open of finite index and $m(H)=[G:H]^{-1}$.
\item[{\rm (ii).}] A subgroup of countably infinite index is nonmeasurable. A subgroup
of finite index which is not closed is also nonmeasurable.
\item[{\rm (iii).}] If a subgroup is nonmeasurable, its closure is open and has finite
index.
\end{enumerate}
\end{proposition}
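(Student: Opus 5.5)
The plan is to prove (i) first with a Steinhaus-type argument, and then read off (ii) and (iii) from it.

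For (i), let $H$ be measurable with $m(H)>0$. Regularity of Haar measure gives a compact $C\subseteq H$ with $m(C)>0$. The function $\varphi(g)=m(C\cap gC)=\int \one_C\,L_g\one_C\,dm$ is continuous in $g$, because translation is continuous on $L^1(G,m)$ and $L_g$ is an isometry. Since $\varphi(e)=m(C)>0$, there is an identity neighbourhood $U$ with $\varphi>0$ on $U$. For $g\in U$ the intersection $C\cap gC$ is nonempty, so $g\in CC^{-1}\subseteq H$. Hence $H$ contains a neighbourhood of the identity and is open. Open subgroups are also closed. The cosets of $H$ form an open cover of the compact group $G$ by pairwise disjoint sets, so there are finitely many of them, say $[G:H]$. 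They are translates of $H$, so left invariance gives them equal measure, and $m(G)=1$ forces $m(H)=[G:H]^{-1}$.

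For (ii), first suppose $H$ has countably infinite index and is measurable. Then $G$ is a countable disjoint union of translates $g_iH$, all with the same measure. Countable additivity gives $1=\sum_i m(H)$, which is impossible whether $m(H)=0$ or $m(H)>0$. Next suppose $H$ has finite index $n$ and is measurable. The same disjoint decomposition gives $m(H)=1/n>0$, so by (i) $H$ is open and therefore closed. Taking the contrapositive, a non-closed subgroup of finite index is nonmeasurable.

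For (iii), let $H$ be nonmeasurable. Its closure $\overline H$ is a closed subgroup, hence measurable. If $m(\overline H)>0$, then (i) makes $\overline H$ open of finite index, which is the claim. It therefore remains to exclude $m(\overline H)=0$, and this is the main obstacle. Haar measure is complete, so every subset of a null set is measurable; if $m(\overline H)=0$, then $H\subseteq\overline H$ would be measurable, contradicting the hypothesis. This step uses that $m$ denotes the completed Haar measure. I will state that convention explicitly, because with the Borel $\sigma$-algebra alone the statement fails: a non-Borel subgroup of a null closed subgroup is a counterexample. Under completion, $m(\overline H)>0$, and (i) finishes the proof.
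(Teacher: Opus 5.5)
Your proof is correct and follows the paper's argument essentially step by step. Both use continuity of translation in $L^1$ for (i), countable additivity together with (i) for (ii), and completeness of Haar measure to exclude $m(\overline H)=0$ in (iii); the paper invokes completeness there too. The only difference is in (i): the paper applies the estimate $\|\mathbf 1_{gH}-\mathbf 1_H\|_1<2m(H)$ directly to $H$, since $gH\cap H\neq\emptyset$ already forces $g\in H$. Your passage to a compact $C\subseteq H$ via inner regularity is therefore harmless but unnecessary.
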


\begin{proof}
Translations act continuously on $L^1(G,m)$: first verify this for
continuous functions using uniform continuity on the compact group, then
use their density in $L^1$ and invariance of $m$. If a measurable $H$ has
positive measure, then
$\|\mathbf1_{gH}-\mathbf1_H\|_1<2m(H)$ for all $g$ sufficiently near the
identity. Hence $gH\cap H$ is nonempty, which implies $g\in H$.
Thus $H$ is open. Its cosets form an open cover of the compact space $G$,
so the index is finite, and invariance gives the stated measure. Every
open subgroup is closed because its complement is a union of open cosets.

If $G$ is the disjoint union of countably infinitely many cosets of a
measurable $H$, countable additivity and equality of their measures give
a contradiction, whether $m(H)$ is zero or positive. A measurable
finite   index subgroup has positive measure and is therefore closed by
the first part. Finally, if $\overline H$ is not open, the first part
gives $m(\overline H)=0$. Completeness of the measure then makes every
subset of $\overline H$, including $H$, measurable. This proves the last
assertion.
\end{proof}

These standard facts \cite{hhm} support the following explicit version of
the nonmeasurability example. It concerns non-Borel subgroups and is
independent of the perimeter compactness argument.

\begin{example}[A Nonmeasurable Subgroup of the Cartesian Product of  Tori]
Choose a Hamel basis of $\mathbb R$ over $\mathbb Q$ containing $1$ and an
irrational number $b$. Define a $\mathbb Q$-linear map
$\lambda:\mathbb R\to\mathbb Q$ by $\lambda(b)=1$ and by setting
$\lambda$ equal to zero on all other basis vectors. Since
$\mathbb Z\subseteq\ker\lambda$, it induces a surjective group
homomorphism
\[
 \overline\lambda:\mathbb T\to\mathbb Q,
 \qquad t+\mathbb Z\longmapsto\lambda(t).
\]
The subgroup $H=\ker\overline\lambda$ has countably infinite index and is
therefore nonmeasurable by Proposition~\ref{prop:background-subgroups}.
It is dense, since it contains $\mathbb Q/\mathbb Z$. Its inverse image
under the first-coordinate projection
$\mathbb T^{\mathbb N}\to\mathbb T$ also has countably infinite index and
is nonmeasurable. Neither subgroup is Borel. 
\end{example}

\subsection{Further classical comparisons}

The separation and mapping results below place the preceding examples in their usual topological context; none is an additional hypothesis of the compactness theorem. A completely regular Hausdorff space is called Tychonoff and embeds in a  cube $[0,1]^I$ of dimension $|I|$. Every locally compact Hausdorff space is Tychonoff. Every compact Hausdorff space is normal, and every metrizable space is normal. See terminology and definitions in \cite{willard}. Thus the Tychonoff plank \cite[18 G]{willard}, being compact Hausdorff, is completely regular; its subspaces remain completely regular. The Sorgenfrey line \cite[Examples 15.3]{willard} is regular, since its basic half-open intervals are clopen. The Moore plane \cite[Example 15.3,  Problem 4B]{willard} provides a completely regular space which is not normal. These examples distinguish separation axioms, rather than provide obstructions to Haar averaging for compact groups and locally compact groups in Theorems \ref{thm:intro} and\ref{thm:dual}, or in Lemmas \ref{thm:tower}.

For a normal Hausdorff space, Urysohn's Lemma (see \cite{willard}) separates disjoint closed sets by a continuous function into $[0,1]$, and Tietze's theorem extends a continuous real-valued function on a closed subset to the whole space. These are useful classical extension principles, but carry no canonical group action or invariant measure. Therefore they do not offer a significant framework of generalization to our main results.

The Cantor cube $\{0,1\}^{I}$ is a compact group with coordinatewise addition modulo two. A space is dyadic if it is a continuous image of a Cantor cube, see \cite[Alexandroff Theorem on Dyadic Spaces,  Lemma A4.31, Corollary A4.32, Proposition A4.20]{hofmor}. Every nonempty compact metrizable space is a continuous image of $\{0,1\}^{\N}$; a nonempty compact metrizable space is homeomorphic to that countable cube precisely when it is totally disconnected and has no isolated points. Products of nonempty compact metrizable spaces are consequently dyadic, by taking products of the surjections. A nonempty compact connected locally connected metrizable space is a Peano space; the Hahn--Mazurkiewicz Theorem  \cite[Theorem 31.5]{willard} characterizes these as the Hausdorff continuous images of $[0,1]$. Such continuous images need not be homomorphisms, so these topological statements do not replace the quotient results of Section~\ref{isoperimetricomparison}.

Finally, in the finite   dimensional linear context, the continuous linear endomorphisms of $\R^n$ form $M_n(\R)$, a normed algebra, and its open group of units $\mathrm{GL}_n(\R)$ has dimension $n^2$. More generally the pointwise operator topology on continuous linear maps must be distinguished from the operator norm topology in infinite dimension. In topological group theory, a group is \textit{almost connected} when $G/G_0$ is compact with $G_0$ largest connected normal subgroup of $G$. The general pro-Lie definition does not require local compactness, but almost connected group turn out to be examples of pro-Lie groups; for example an infinite product of copies of $\R$ is pro-Lie but is not locally compact \cite{prolie,hilgertneeb}. Therefore the category of pro-Lie groups with corresponding morphisms is extremely rich and sophisticated.



\end{document}